\documentclass{article}

\usepackage[all]{xy}
 \usepackage[usenames,dvipsnames]{pstricks}
 \usepackage{epsfig}
 \usepackage{pst-grad} 
 \usepackage{pst-plot} 
\usepackage{ebezier}
\usepackage{mathrsfs}

\usepackage[
        colorlinks, citecolor=red,
        backref,
        pdfauthor={MDJ,JK},
        pdftitle={S1}
]{hyperref}

\usepackage{latexsym}\usepackage{amsfonts}\usepackage{amssymb}\usepackage{amsthm}
\usepackage{amsmath}\usepackage{newlfont}\usepackage{graphicx}\usepackage{doc}
\usepackage{color}\usepackage{multicol}\usepackage{epic}\usepackage{eepic}
\usepackage{ebezier}
\newtheorem{thrm}{Theorem}
\newtheorem{lem}[thrm]{Lemma}
\newtheorem{cor}[thrm]{Corollary}

\newtheorem{prop}[thrm]{Proposition}
\newtheorem{exam}[thrm]{Example}

\newtheorem*{theorem-non}{Theorem}

\newtheorem*{namedthm}{\namedthmname}
\newcounter{namedthm}



\include{epsf}

\def \Dj{\mbox{\raise0.3ex\hbox{-}\kern-0.4em D}}

\begin{document}

\title{A flexibility result for polynomial entropy of pointwise periodic homeomorphisms}

\author{Ma\v{s}a \Dj ori\'c\thanks{Corresponding author: Ma\v{s}a \Dj ori\'c.}\\
Matemati\v{c}ki institut SANU\\
Knez Mihailova 36\\
11000 Beograd\\
Serbia\\
masha@mi.sanu.ac.rs\\ \and
Jelena Kati\'c\\
Matemati\v cki fakultet\\
Studentski trg 16\\
11000 Beograd\\
Serbia\\
jelena.katic@matf.bg.ac.rs \and
Milan Peri\'c\\
Matemati\v cki fakultet\\
Studentski trg 16\\
11000 Beograd\\
Serbia\\
milan.peric@matf.bg.ac.rs
}

\maketitle

\begin{abstract} 

We construct a family of continua and pointwise periodic homeomorphisms realizing arbitrary polynomial entropy values in $[0,+\infty]$. In particular, this provides examples of pointwise periodic homeomorphisms with positive polynomial entropy. This contrasts with the fact that pointwise periodic homeomorphisms on connected manifolds and local dendrites have zero polynomial entropy.

\end{abstract}

\medskip

{\it 2020 Mathematical  subject classification:} Primary 37B40, Secondary 54F16, 37A35 \\
{\it Keywords:}  Polynomial entropy, pointwise periodic homeomorphisms, local dendrites, distal homeomorphisms

\section{Introduction}

A useful invariant for studying dynamical systems with zero topological entropy is polynomial entropy. While both topological and polynomial entropy measure orbit complexity, they do so on different scales. Topological entropy describes the asymptotic exponential growth of orbit complexity, whereas polynomial entropy detects growth occurring at a polynomial rate. Consequently, polynomial entropy provides a finer classification within the class of systems whose topological entropy vanishes. To illustrate this distinction, compare an irrational rotation of the circle with a homeomorphism that possesses both periodic and wandering points. Although each of these systems has zero topological entropy, the rotation is intuitively much less complex. A result of Labrousse shows that polynomial entropy is capable of distinguishing between such examples (see Theorem 1 in~\cite{L}) and that the obtained numerical values align with our intuition regarding complexity.

Topological and polynomial entropy share several fundamental features. Both are invariant under topological conjugacy, depend only on the underlying topology rather than on the particular compatible metrics, satisfy the finite union property, and they both admit a product formula. Nevertheless, important differences remain. Certain classical properties of topological entropy, including the power formula, the $\sigma$-union property, and the variational principle, need not hold for polynomial entropy (see~\cite{M1,M2}). In addition, topological entropy is completely determined by the restriction of the system to its non-wandering set, which is a closed invariant subset. Polynomial entropy behaves differently: it can reflect dynamical behaviour occurring outside the non-wandering set. Thus, whereas topological entropy is insensitive to the wandering dynamics, polynomial entropy detects and quantifies the contribution of the wandering part to the complexity of the system. In this paper, however, we deal exclusively with systems that have no wandering points.

It is clear that periodic homeomorphisms have both topological and polynomial entropy equal to zero. Interestingly, there exist pointwise periodic homeomorphisms, for which $\mathrm{Per}(f)=X$, but which are not periodic. For these homeomorphisms it is rather easy to deduce that the topological entropy is equal to zero (due to the $\sigma$-union property), but it is not clear if, and under which additional assumptions, polynomial entropy is equal to zero as well. The first two authors in \cite{DK} prove that homeomorphisms without wandering points on local dendrites have zero polynomial entropy. Also, a well known theorem by Montgomery in \cite{M} states that the pointwise periodic homeomorphisms on connected manifolds have to be periodic, and therefore have zero polynomial (and topological) entropy.

We construct an example of a pointwise periodic homeomorphism on a continuum, inspired by a construction in \cite{AOM}, which has positive polynomial entropy. Furthermore, we obtain a flexibility result - for every $a\in[0,+\infty]$ there exists a pointwise periodic homeomorphism $f:X\to X$ on a continuum $X$, such that $h_{\mathrm{pol}}(f)=a$. Let us formulate our main result in the following theorem:

\begin{theorem-non}
For every $a\in[0,+\infty]$ there exists a compact connected metric space $X$ and a pointwise periodic homeomorphism $f:X\to X$ such that $h_{\mathrm{pol}}(f)=a$.
\end{theorem-non}

\section{Polynomial entropy}\label{subsec:entropy}

Suppose that $(X,d)$ is a compact metric space, and $f:X\rightarrow X$ is continuous. Denote by $d_n^f(x,y)$ the dynamic metric (induced by $f$ and $d$):
$$
d_n^f(x,y)=\max\limits_{0\leqslant k\leqslant n-1}d(f^k(x),f^k(y)).
$$
For $\varepsilon>0$, we say that a finite set $E\subset X$ is \textit{$(n,\varepsilon)$-separated} if for every $x,y \in E$ it holds $d_n^f(x,y)\geq \varepsilon$. Let $\mathrm{sep}(n,\varepsilon)$ denote the maximal cardinality of an $(n,\varepsilon)$-separated set $E$.

\begin{def}\label{def:pol_ent} The \textit{polynomial entropy} of the map $f$ on the compact metric space $X$ is defined by
$$
h_{\mathrm{pol}}(f,X)=\lim\limits_{\varepsilon \rightarrow 0}\limsup\limits_{n\rightarrow \infty}\frac{\log \mathrm{sep}(n,\varepsilon)}{\log n}.
$$
\end{def}

We can also define the polynomial entropy as follows. Let $\mathrm{span}(n,\varepsilon)$ denote the minimal number of balls of radius $\varepsilon$ (with respect to $d_n^f$) that cover $X$. Denote by $\mathrm{cov}(n,\varepsilon)$ the minimal number of sets $X_j$ such that the diameters (with respect to $d_n^f$) of $X_j$ are smaller than $\varepsilon$ and $X\subseteq\cup_{j=1}^mX_j$.

From the following sequence of inequalities
$$\mathrm{cov}(n,2\varepsilon)\leqslant \mathrm{span}(n,\varepsilon)\leqslant \mathrm{sep}(n,\varepsilon)\leqslant \mathrm{cov}(n,\varepsilon)$$ we conclude that
$$h_{\mathrm{pol}}(f,X)=\lim\limits_{\varepsilon \rightarrow 0}\limsup\limits_{n\rightarrow \infty}\frac{\log \mathrm{span}(n,\varepsilon)}{\log n}=\lim\limits_{\varepsilon \rightarrow 0}\limsup\limits_{n\rightarrow \infty}\frac{\log \mathrm{cov}(n,\varepsilon)}{\log n}.$$

We often abbreviate $h_{\mathrm{pol}}(f):=h_{\mathrm{pol}}(f,X)$. We list some properties of the polynomial entropy that are important for our computations (for proofs see Propositions $1-4$ in \cite{M2}):
\begin{itemize}
\item[(1)] $h_{\mathrm{pol}}(f^k)=h_{\mathrm{pol}}(f)$, for any $k\geqslant 1$.
\item[(2)] If $X=\bigcup_{j=1}^mX_j$ where $X_j$ are closed and $f$-invariant, then $h_{\mathrm{pol}}(f,X)=\max\{h_{\mathrm{pol}}(f,X_j)\mid j=1,\ldots,m\}$\label{finite-union}.\label{(3)}
\item[(3)] If $f:X\to X$, $g:Y\to Y$ and $f\times g\;:X\times Y\to X\times Y$ is defined as $f\times g (x,y):=(f(x),g(y))$, then $h_{\mathrm{pol}}(f\times g)=h_{\mathrm{pol}}(f)+h_{\mathrm{pol}}(g)$.
\item[(4)] $h_{\mathrm{pol}}(f)$ does not depend on a metric but only on the induced topology.
\item[(5)] $h_{\mathrm{pol}}(\cdot)$ is a \textit{conjugacy invariant} (meaning if $f:X\to X$, $g:X'\to X'$, $\varphi:X\to X'$ is a homeomorphism of compact spaces and $g\circ\varphi=\varphi\circ f$, then $h_{\mathrm{pol}}(f)=h_{\mathrm{pol}}(g)$).
\item[(6)] If $f:X\to X$ and $g:X'\to X'$ are \textit{semi-conjugated}, meaning that $\varphi:X\to X'$ is a continuous surjective map of compact spaces and $g\circ\varphi=\varphi\circ f$, then $h_{\mathrm{pol}}(f)\geqslant h_{\mathrm{pol}}(g)$.
\end{itemize}

A set $A\subset X$ is \textit{wandering} if $f^n(A)\cap A=\emptyset$, for all integer $n\geqslant 1$. A point $p\in X$ is \textit{wandering} if there exists a wandering neighbourhood $U\ni p$. \label{wan-pt}A point that is not wandering is said to be \textit{non-wandering}. 
We denote the set of all periodic points by $\mathrm{Per}(f)$.

\section{Pointwise periodic homeomorphisms}

Let $X$ be a compact metric space. We say that a homeomorphism $f:X\to X$ is \textit{pointwise periodic} if for every $x\in X$ there is $n(x)\in\mathbb{N}$ such that $f^{n(x)}(x)=x$. In this case, $\mathrm{Per}(f)=X$, so there aren't any wandering points. Since we can write $X=\bigcup\limits_{n\in\mathbb{N}}P_n$, where $P_n$ is the set of points in $X$ with minimal period equal to $n$, using the $\sigma-$union property of topological entropy and the fact that $\mathrm{h_{top}}(f,P_n)=0$, we deduce that $\mathrm{h_{top}}(f,X)=0$.

 We say that a homeomorphism $f:X\to X$ is \textit{periodic} if there exists $n\in\mathbb{N}$ such that $f^{n}(x)=x$, for all $x\in\mathbb{X}$. There are many examples of pointwise periodic homeomorphisms that are not periodic. However, when $X$ is a connected manifold this is not the case (see \cite{M}):
\begin{thrm}
Every pointwise periodic homeomorphism on a connected manifold is periodic.
\end{thrm}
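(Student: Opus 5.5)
The plan is to follow Montgomery's classical argument, which combines a Baire category argument with the rigidity of periodic maps on manifolds (Newman's theorem). Write $M=\bigcup_{n\in\mathbb{N}}F_n$, where $F_n=\{x\in M: f^{n}(x)=x\}$ is closed. Since $M$ is a Baire space, some $F_n$ has nonempty interior. The strategy is to show first that the set of points with locally maximal period is dense and open, and then to propagate a single global period across the connected manifold.

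\textbf{Step 1 (local structure).} For $x\in M$ with minimal period $p(x)$, continuity shows that every point $y$ near $x$ has its orbit near the orbit of $x$. Let $U$ be the union of the interiors of the sets $F_n$. By Baire, applied to every open subset of $M$, $U$ is open and dense, and on each component $W$ of the interior of $F_n$ the map $f$ is periodic with period dividing $n$. On such a region I will invoke Newman's theorem: a nontrivial periodic homeomorphism of a connected manifold cannot have all orbits of arbitrarily small diameter, and its fixed-point set of any nontrivial power has empty interior. Consequently, on an open set where $f^n=\mathrm{id}$, the set of points of minimal period exactly $n$ is open and dense.

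\textbf{Step 2 (uniform bound).} This is the main obstacle. I need to show that the periods are bounded near every point $x\in M\setminus U$, and in particular that $p(y)$ is bounded for $y$ near $x$. The approach is by contradiction. Suppose $y_k\to x$ with $p(y_k)\to\infty$. Then, near $x$, consider $g=f^{p(x)}$, which fixes $x$. The points $y_k$ have $g$-orbits that stay in a small neighbourhood of $x$ (by equicontinuity-type estimates obtained from Baire applied to the sets $\{y: f^{j}(y)\in B(x,\varepsilon),\ j\le n\}$) but have large period. By a Newman-type argument on a small Euclidean ball around $x$, a periodic homeomorphism with all orbits in a tiny ball must be the identity, which forces the periods near $x$ to divide a fixed number. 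The delicate point is that $g$ is only pointwise periodic near $x$, not periodic. I would handle this by applying Newman's theorem to the open pieces of $U$ accumulating at $x$, where $g$ is genuinely periodic and all its orbits are small. This contradicts the unbounded periods, and so the periods are locally bounded.

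\textbf{Step 3 (conclusion).} Local boundedness means that every $x$ has a neighbourhood $V_x$ with $f^{N_x}=\mathrm{id}$ on $V_x$. Hence the function assigning to $x$ the least $N$ with $f^{N}=\mathrm{id}$ near $x$ is locally constant. Since $M$ is connected, it is constant, and $f$ is periodic.
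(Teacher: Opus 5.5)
The paper gives no proof of this statement; it quotes Montgomery~\cite{M}. Your outline names the right ingredients, Baire and Newman, but Step~2, which you rightly call the main obstacle, does not go through.

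\emph{The small-orbit claim.} Your assertion that the $g$-orbits of the $y_k$ stay near $x$ cannot come from Baire. The sets $\{y: f^j(y)\in B(x,\varepsilon),\ j\le n\}$ decrease in $n$, so Baire gives nothing. Moreover, smallness of orbits near a point of $M\setminus U$ is essentially the local form of the theorem itself. Pointwise periodicity alone does not give it. In the paper's own continuum $X$, every point of $D_0$ is fixed, yet points of $D_k$ arbitrarily close to a point $p\in D_0$ off the axis $I$ have orbits running all the way around the circle of radius $|p|$. So this property must come from the manifold structure; it cannot be an input.

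\emph{The use of Newman's theorem.} Even granting small orbits, you cannot apply Newman's theorem to ``the open pieces of $U$ accumulating at $x$''. First, $g=f^{p(x)}$ need not map such a piece into itself, so it is not a periodic homeomorphism of that piece. Second, the constant in the small-orbit form of Newman's theorem depends on the manifold and shrinks with the pieces: a rotation of a tiny disc has only tiny orbits. More fundamentally, $g$ fixes only the single point $x$, which gives no control over the frontiers of these pieces.

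\emph{The missing idea.} You need a second Baire argument, applied to the closed $f$-invariant set $H:=M\setminus U$, assumed nonempty. It yields an open $V$ and an $n$ with $\emptyset\neq H\cap V\subset \mathrm{Fix}(f^n)$. Then the single power $g=f^n$ fixes every frontier point in $V$ of every component of $U$.
\begin{itemize}
\item Let $C$ be a component of $U$ with $\partial C\cap V\neq\emptyset$, and let $\mathcal C$ be the union of its finitely many $g$-images.
\item Define a map equal to $g$ on $\mathcal C$ and to the identity on the rest of the open set $M\setminus(\partial\mathcal C\setminus V)$. This is a periodic homeomorphism of that open set.
\item It is the identity on a nonempty open part of the component containing $C$, because $\overline{\mathcal C}$ cannot contain a neighbourhood of a point of $H$.
\item Newman's theorem, in the form ``a periodic homeomorphism of a connected manifold which is the identity on a nonempty open set is the identity'', gives $f^n=\mathrm{id}$ on $C$.
\item Let $B\subset V$ be a connected neighbourhood of $x\in H\cap V$. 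Every component of $U$ meeting $B$ has frontier meeting $B$. Together with $H\cap B\subset \mathrm{Fix}(f^n)$, this gives $f^n=\mathrm{id}$ on $B$, contradicting $x\in H$.
\end{itemize}

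\emph{Step~3.} The argument above uses that $f$ is periodic on each component of $U$. That also needs the same form of Newman's theorem, applied on an invariant connected open set. It is not automatic, so the ``hence'' in Step~3 is a second gap. Local boundedness only gives $N_z\mid N_x$ for $z$ near $x$. For noncompact $M$ you cannot simply take an lcm over a finite cover.
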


One of the most natural examples of non-periodic pointwise periodic homeomorphisms is on a dendrite (see Example \ref{ex:dendrite}). We say that a space $D$ is a {\it dendrite} if $D$ is a locally connected continuum (a nonempty connected compact metric space) containing no simple closed curves. Some basic properties of dendrites and dendrite maps can be found, for example, in \cite{JB}.

A \textit{local dendrite} is a continuum such that its every point has a dendrite neighbourhood. Every local dendrite is a \textit{Peano continuum} (locally connected continuum) which has a finite number of circles. Dendrites and graphs are local dendrites. It is known that Peano continua are arcwise connected and locally arcwise connected (see \cite{SN}). The proof of the following theorem in a more general setting for regular curves can be found in \cite{GS}.

\begin{prop}
If $f:X\to X$ is a homeomorphism on a local dendrite $X$, then the topological entropy of $f$ is equal to zero.
\end{prop}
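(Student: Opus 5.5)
The plan is to reduce the statement to the dendrite case, where homeomorphisms are well understood, and then use the finite union property together with the fact that topological entropy is witnessed on a finite cover.

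\textbf{Step 1: reduction to the universal cover (or to a dendrite).} A local dendrite $X$ contains only finitely many simple closed curves. Their union, together with arcs joining them, forms a finite graph $G\subset X$. This core graph can be chosen $f$-invariant: take the union of all circles in $X$ together with all arcs lying between two of them, which $f$ permutes. Then $X\setminus G$ has components that are open subsets of dendrites, each attached to $G$ at a single point.

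\textbf{Step 2: local analysis.} By the variational principle, $h_{\mathrm{top}}(f)$ is the supremum of $h_\mu(f)$ over ergodic invariant measures $\mu$. It therefore suffices to show that the non-wandering dynamics carries no entropy. The plan is to use the standard fact that a homeomorphism of a regular curve has zero entropy. The idea is that every point has arbitrarily small neighbourhoods $U$ whose boundary $\partial U$ is finite. Choose a finite open cover $\mathcal U$ of mesh $<\varepsilon$ by such sets, and let $B=\bigcup_{U\in\mathcal U}\partial U$, which is a finite set. For the join $\bigvee_{k=0}^{n-1}f^{-k}\mathcal U$, the number of elements needed to cover $X$ is controlled by the complementary components of $\bigcup_{k<n}f^{-k}(B)$. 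This union is a set of at most $n|B|$ points.

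\textbf{Step 3: counting.} In a local dendrite, removing $m$ points yields a number of components bounded linearly in $m$, plus the contribution of branching. More precisely, let the finite set $B$ avoid branch points of infinite order; in a local dendrite, points of order $\ge3$ are countable, so this can be arranged. Each removed point then has finite order bounded by some $N$. The number of components of $X\setminus F$ is therefore at most $1+\sum_{x\in F}(\mathrm{ord}(x)-1)+\beta_1(X)$. Since $f$ is a homeomorphism, $\mathrm{ord}(f^{-k}(x))=\mathrm{ord}(x)\le N$. Hence $\mathrm{cov}(n,\varepsilon)\le C\,n$, which gives $\limsup\frac1n\log\mathrm{cov}(n,\varepsilon)=0$, and so $h_{\mathrm{top}}(f)=0$. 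Each element of the join must meet the closure of only boundedly many such components. This is the technical point: the component count must bound the number of cover elements, which requires the cover sets to be connected with their boundaries inside $B$.

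\textbf{Main obstacle.} I expect the main difficulty to be the control in Step 3. Cover elements of small diameter must be identified with unions of complementary components, uniformly in $n$. This is where local connectedness and the finite-order boundary points are essential, and I would handle it by the regular-curve argument cited from \cite{GS}.
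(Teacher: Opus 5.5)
The paper gives no argument here. It quotes the theorem of \cite{GS} that homeomorphisms of regular curves have zero topological entropy (a local dendrite is a regular curve), so your final fallback on \cite{GS} is exactly the paper's proof. Your sketch in Steps 2--3 is a viable self-contained alternative, but you have put the difficulty in the wrong place.

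Step 1 and the appeal to the variational principle are never used and can be dropped. The step you call the main obstacle is immediate, and you state it in the wrong direction. You do not need an element of the join to meet only boundedly many components, and you do not need the sets in $\mathcal U$ to be connected. Let $F_n=\bigcup_{k<n}f^{-k}(B)$ and let $C$ be a component of $X\setminus F_n$. Each $f^k(C)$ is connected and disjoint from $\partial U$ for every $U\in\mathcal U$, so it is contained in any $U$ it meets. Hence $C$ lies in a single element of $\bigvee_{k<n}f^{-k}\mathcal U$. The join therefore has a subcover with at most (number of components of $X\setminus F_n$) $+\,|F_n|$ elements; the points of $F_n$ themselves must also be counted.

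What actually needs proof is the step you pass over: that $\mathcal U$ can be chosen with every boundary point of finite order. Countability of the branch points does not by itself let you move the boundaries off them. One way to do it is as follows.
\begin{itemize}
\item Take a finite graph $T\subset X$ whose first-point retraction $r\colon X\to T$ moves points by less than $\varepsilon$.
\item Cover $T$ by small open sets whose finite boundaries consist of points in the interiors of edges at which no component of $X\setminus T$ is attached. All but countably many points of each edge qualify.
\item Pull this cover back by $r$. At such points $r^{-1}(p)=\{p\}$, and $\partial r^{-1}(W)\subset r^{-1}(\partial_T W)$, so the new boundaries are finite sets of points of order at most $2$.
\end{itemize}
Combined with your component count, this gives $\mathrm{cov}(n,\varepsilon)\le 1+2n|B|$; the $\beta_1(X)$ term in your count is superfluous.

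Compared with the paper's one-line citation, which covers all regular curves, your route is elementary. It also yields linear growth of $\mathrm{cov}(n,\varepsilon)$, which is far more than $h_{\mathrm{top}}(f)=0$ requires: it even bounds $h_{\mathrm{pol}}(f)$ by $1$.
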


\begin{exam}\label{ex:dendrite}
Let us begin by saying that an $n-$od space is a union of $n$ straight line segments, any two of which intersect only at their common endpoint $x_0$.

Let $X$ be a dendrite defined in the following way: $X$ is a union of countably many straight line segments in the plane, any two of which intersect only at their common endpoint $x_0$, and such that for every $\varepsilon>0$ there are finitely many segments whose length is greater than $\varepsilon$. $X$ can be represented as a union of countably many $a_n$-od spaces, where $(a_n)_{n\in\mathbb{N}}$ is some strictly increasing sequence.

We define $f:X\to X$ to be rotation $R_{a_n}$ on each $a_n$-od space. In this way, we obtained a pointwise periodic homeomorphism which is not periodic.
\end{exam}
\begin{figure}[h]
    \centering 
    \includegraphics[width=0.4\textwidth]{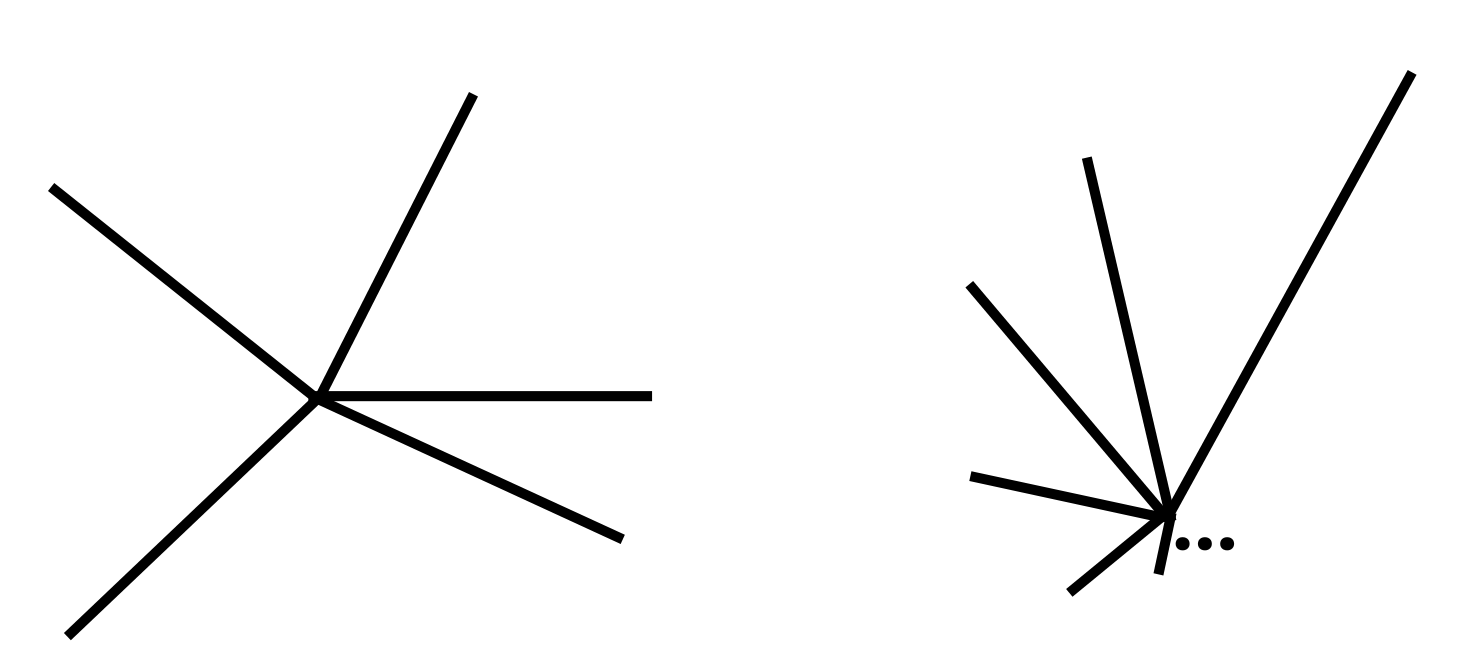}
    \caption{5-od on the left and $X$ on the right}
\end{figure}

Despite the fact that the map from the previous example is not periodic, its polynomial entropy is still equal to zero, as is shown by the following proposition. 

\begin{prop}
Let $X$ be a local dendrite and $f:X\to X$ a pointwise periodic homeomorphism. Then $h_{\mathrm{pol}}(f)=0$.
\end{prop}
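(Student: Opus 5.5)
The proposition should follow almost immediately from the result of the first two authors in \cite{DK} quoted in the introduction: every homeomorphism without wandering points on a local dendrite has zero polynomial entropy. So the plan is to reduce to that theorem.

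First, observe that a pointwise periodic homeomorphism has no wandering points. Take $p\in X$ and any neighbourhood $U\ni p$. Since $f^{n(p)}(p)=p$, we get $p\in f^{n(p)}(U)\cap U$, so $U$ is not wandering. Hence $p$ is non-wandering. This is the remark already made at the start of this section, where $\mathrm{Per}(f)=X$. Second, $X$ is a local dendrite by hypothesis, so the theorem of \cite{DK} applies and gives $h_{\mathrm{pol}}(f)=0$.

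If a self-contained argument were wanted instead of citing \cite{DK}, I would proceed as follows. Since $X$ has only finitely many circles, I would pass to a suitable iterate $f^k$; this is allowed because $h_{\mathrm{pol}}(f^k)=h_{\mathrm{pol}}(f)$ by property (1). I would choose $k$ so that $f^k$ fixes the finitely many branch points of order at least $3$ on the circles. I would also want it to fix each circle and the finite "core" graph of $X$ pointwise or at least setwise. The complement of the core is then a union of dendrite pieces attached at finitely many points. On each of these pieces I would show that $f^k$ is equicontinuous, using the property of dendrites that small subcontinua have small diameter, together with pointwise periodicity. Equicontinuity would make $\mathrm{sep}(n,\varepsilon)$ bounded in $n$, which gives zero polynomial entropy.

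I expect the main obstacle in this self-contained route to be the equicontinuity step on the dendrite pieces. The difficulty is controlling how arcs, which are unique in a dendrite, are permuted by the iterates so that the orbits of nearby points stay uniformly close. For this reason I would simply invoke \cite{DK}.
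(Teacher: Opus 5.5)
Your proof is the paper's proof. Pointwise periodicity gives $\mathrm{Per}(f)=X$, so there are no wandering points; your check that $p\in f^{n(p)}(U)\cap U$ makes this step explicit where the paper leaves it implicit. Theorem 4.2 of \cite{DK} then yields $h_{\mathrm{pol}}(f)=0$.
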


\noindent{\it Proof.} 
For pointwise periodic homeomorphisms we have that $\mathrm{Per}(f)=X$, so $f$ possesses no wandering points. Using Theorem 4.2 in \cite{DK}, we conclude that $h_{\mathrm{pol}}(f)=0$.
 \qed

So far we have seen that for connected manifolds and local dendrites pointwise periodicity implies zero polynomial entropy. In the next section we will construct a continuum and a pointwise periodic homeomorphism which has positive polynomial entropy, showing that the following theorem is true:

\begin{thrm}\label{main}
For every $\alpha>2$ there exists a continuum $X$ and a pointwise periodic homeomorphism $f:X\to X$ such that $h_{\mathrm{pol}}(f)=\frac{1}{\alpha}$.
\end{thrm}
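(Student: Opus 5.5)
We build $X$ as a compactification of a countable union of invariant circles with rational rotations whose periods grow, accumulating on a limit circle on which $f$ is the identity. This is in the spirit of the construction in \cite{AOM}. Concretely, fix $\alpha>2$ and set $r_k=1/k$. For $k\geq 1$ let $C_k=\{(r_k,\theta)\}$ be a circle with $f$ acting as the rotation by $2\pi/k$, so every point of $C_k$ has period $k$. Let $C_0=\{0\}\times S^1$ (or a limit circle) carry the identity. To make the space connected, join consecutive circles by arcs that are themselves pointwise periodic. One option is to take $X$ to be a union of ``combs'' of segments $I_{k,j}$ joining the point $\theta=2\pi j/k$ of $C_k$ to $C_{k+1}$, permuted cyclically. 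A cleaner option is to replace circles by annular pieces whose radial coordinate is fixed and whose angular speed depends on the radius through a step-like function, so that each fibre is periodic. The parameter $\alpha$ enters through how fast the circles accumulate. We choose the radii $r_k$, or the spacing of the circles, so that $d(C_k,C_0)\asymp k^{-(\alpha-1)}$ or a similar power. The entropy exponent is then governed by this rate, and we tune the exponent at the end to obtain exactly $1/\alpha$. Pointwise periodicity is immediate: each piece is invariant with finite period, and $C_0$ is fixed. Continuity of $f$ at $C_0$ holds because rotations by $2\pi/k$ tend to the identity, and $f^{-1}$ is continuous for the same reason.

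For the upper bound, we use the finite union property (2) and the fact that $d_n^f$ on $C_k$ is controlled by period and rotation size. Fix $\varepsilon$. The circles $C_k$ with $k\le K(\varepsilon)$ are finitely many periodic pieces and contribute $O(1)$ separated points. For $k$ large, the circle $C_k$ lies within $\varepsilon$ of $C_0$ only in the radial coordinate. Two points of $C_k$ are $(n,\varepsilon)$-separated only if their angles, rotated over $n$ steps, differ by $\varepsilon$, and this costs about $1/\varepsilon$ points per circle. Points on different circles $C_k$, $C_{k'}$ are separated only if the rotations $2\pi m/k$ and $2\pi m/k'$ diverge by at least $\varepsilon$ within $m<n$ steps. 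This requires $n|1/k-1/k'|\gtrsim\varepsilon$, while the radial distances must be $<\varepsilon$, so $k\gtrsim \varepsilon^{-1/(\alpha-1)}$. Counting the distinguishable circles among those indices gives $\mathrm{sep}(n,\varepsilon)\lesssim C(\varepsilon)\, n^{1/\alpha}$. The counting comes from balancing two constraints on the index: circles with $k\lesssim n^{1/\alpha}$ can be told apart by their rotation differences, and beyond that threshold they cannot. With a suitable choice of the spacing exponent, the balance point is $k\sim n^{1/\alpha}$. The connecting arcs are handled the same way, since they are permuted with the periods of their endpoints. We expect the exact exponent matching between the spacing and $1/\alpha$ to need adjustment, and this is where $\alpha>2$ is needed: it keeps the construction continuous and the arcs shrinking.

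For the lower bound, we exhibit an explicit $(n,\varepsilon)$-separated set. Take one point on each circle $C_k$ for $k$ in a range of about $n^{1/\alpha}$ consecutive indices near the balance point. For $k\neq k'$ in this range, the angle difference after $m$ steps is $2\pi m(1/k-1/k')$ modulo $2\pi$. Choosing $m<n$ appropriately makes this difference of order 1, giving separation at least $\varepsilon$ independent of $n$. This yields $\log \mathrm{sep}(n,\varepsilon)/\log n\to 1/\alpha$.

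The main obstacle is making the construction simultaneously (a) connected with pointwise periodic connecting pieces, and (b) precise enough that the upper count is exactly $n^{1/\alpha}$. We would first try the version in which each connecting arc between $C_k$ and $C_{k+1}$ is replaced by $\operatorname{lcm}(k,k+1)$ arcs rotated with period $\operatorname{lcm}(k,k+1)$. We would then check that this does not raise the exponent, using the semi-conjugacy property (6) onto the radial projection together with the finite union property.
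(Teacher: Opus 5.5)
The main gap is that you tune the wrong parameter. Your separation mechanism is the right one: two circles are told apart by time $n$ once their rotation numbers differ by about $\varepsilon/n$. But that mechanism shows $h_{\mathrm{pol}}$ is governed by the set of rotation numbers, not by how fast the circles approach $C_0$.

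In your model $C_k$ has rotation number $1/k$. Take the point at angle $0$ on each $C_k$ with $k\ge 2$ and $k(k+1)\le n/\varepsilon$. Any two of these rotation numbers differ by a quantity in $[\varepsilon/n,1/2]$, so the doubling argument gives some $j<n$ at which their angles are $\gtrsim\varepsilon$ apart. Hence $\mathrm{sep}(n,c\varepsilon)\gtrsim\sqrt{n/\varepsilon}$, and $h_{\mathrm{pol}}(f)\ge 1/2>1/\alpha$ for every choice of spacing $d(C_k,C_0)$. The radial constraint only discards indices below a threshold $K(\varepsilon)$ that does not depend on $n$. It therefore cannot create a balance point at $k\sim n^{1/\alpha}$, and your bound $\mathrm{sep}(n,\varepsilon)\lesssim C(\varepsilon)n^{1/\alpha}$ is false.

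What is missing is the paper's identity $h_{\mathrm{pol}}(f)=\dim_B(A)$, where $A$ is the set of rotation numbers together with $0$. The upper bound covers $A$ by balls of radius $\varepsilon/(2n)$, and the lower bound uses an $\varepsilon/(n\rho_0)$-separated subset of $A$. The construction then takes periods $[k^{\alpha-1}]$, i.e.\ $A=\{1/[k^{\alpha-1}]\}\cup\{0\}$, which has $\dim_B(A)=1/\alpha$. The hypothesis $\alpha>2$ only says that this exponent $\alpha-1$ exceeds $1$; it has nothing to do with continuity.

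The second gap is connectedness, which you leave open, and neither option you sketch works. A step-like angular speed makes $f$ discontinuous across the steps, and a continuous speed with every circle periodic must be constant.

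Combs of arcs between $C_k$ and $C_{k+1}$, including the $\mathrm{lcm}$ version, meet a winding obstruction. Suppose $P\supset C_k\cup C_{k+1}$ is connected and invariant, lies where the angle coordinate is defined, and satisfies $f^L=\mathrm{id}$ on $P$. Suppose also that $f$ moves every point of $P$ by less than half a turn; continuity at $C_0$, where $f=\mathrm{id}$, forces this once $P$ is near $C_0$. Then the lifted angular displacement $\Delta(x)\in(-1/2,1/2)$ is continuous. So $W(x)=\sum_{i=0}^{L-1}\Delta(f^i(x))$ is a continuous integer-valued function on $P$, hence constant. Yet $W=L/k$ on $C_k$ and $W=L/(k+1)$ on $C_{k+1}$.

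The paper sidesteps this. Each circle is filled in to a unit disk rotated about its centre, and all centres lie on a vertical segment $I$ fixed by $f$. Then $X$ is connected for free, and continuity only needs the rotation angles to tend to $0$.

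Finally, property (6) bounds $h_{\mathrm{pol}}(f)$ from below by the entropy of a factor. A radial projection therefore cannot show that the connecting pieces do not raise the exponent.
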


\begin{cor}\label{cor}
For every $a\in[0,+\infty)$ there exists a pointwise periodic homeomorphism $f:X\to X$ on a continuum $X$ such that $h_{\mathrm{pol}}(f)=a$.
\end{cor}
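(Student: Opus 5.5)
Corollary~\ref{cor} follows from Theorem~\ref{main} together with the product formula (3) and the facts that a product of continua is a continuum and a product of pointwise periodic homeomorphisms is pointwise periodic. If $f^{n(x)}(x)=x$ and $g^{m(y)}(y)=y$, then $(f\times g)^{\mathrm{lcm}(n(x),m(y))}(x,y)=(x,y)$. So we need to show that every $a\in[0,+\infty)$ is a finite sum of values realized by pointwise periodic homeomorphisms on continua.

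For $a=0$, take the identity on a point, or the dendrite of Example~\ref{ex:dendrite}. For $a>0$, Theorem~\ref{main} realizes every value $\frac{1}{\alpha}$ with $\alpha>2$, that is, every value in $(0,\tfrac12)$.

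\begin{itemize}
\item Given $a>0$, choose an integer $k\geq 1$ with $a/k<\tfrac12$, for instance $k=\lfloor 2a\rfloor+1$.
\item Set $\alpha=k/a>2$ and let $f:X\to X$ be the pointwise periodic homeomorphism on a continuum given by Theorem~\ref{main}, so that $h_{\mathrm{pol}}(f)=a/k$.
\item Put $F=f\times\cdots\times f$ ($k$ factors) on $X^k$. Then $X^k$ is a compact connected metric space, and $F$ is a homeomorphism which is pointwise periodic by the lcm argument above.
\item Applying property (3) $k-1$ times gives $h_{\mathrm{pol}}(F)=k\cdot\frac{a}{k}=a$.
\end{itemize}

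The only step needing care is property (3), which holds as stated in \cite{M2}. Each factor has finite entropy $a/k$, so no $\infty$ arithmetic arises. All the real difficulty lies in Theorem~\ref{main} itself, and the corollary is bookkeeping once that is available.
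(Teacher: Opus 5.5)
Your argument is correct and is essentially the paper's proof: you realize values in $(0,\tfrac12)$ by Theorem~\ref{main} and add them via the product formula (3). The only differences are that you make the decomposition explicit ($k$ copies of one map of entropy $a/k$), and for $a=0$ you use a trivial example (a point, or the dendrite of Example~\ref{ex:dendrite}) where the paper uses the non-periodic map of Proposition~\ref{prop:ekvi}; your choice is equally valid, since the corollary does not require non-periodicity.

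If you would rather not rely on equality in (3) in full generality (with $\limsup$ only the inequality $\leqslant$ is automatic), there is a direct fix for the maps of Theorem~\ref{main}. The lower bound $N(A,\delta)\geqslant c\,\delta^{-a/k}$ from Example~\ref{ex:bd} holds for all small $\delta$, so $\log\mathrm{sep}(n,\varepsilon)/\log n\to a/k$ for each small $\varepsilon$. Then $\mathrm{sep}_F(n,\varepsilon)\geqslant\mathrm{sep}_f(n,\varepsilon)^k$ (with the max metric on $X^k$) gives $h_{\mathrm{pol}}(F)\geqslant a$ directly.
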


Before turning our attention to the proof of the main result, let us mention an important class of dynamical systems with zero topological entropy called {\it distal} dynamical systems. A homeomorphism $f:X\to X$ is called distal if for every two distinct points $x,y\in X$ there exists $\delta>0$ such that $d(f^n(x),f^{n}(y))>\delta$, for all $n\in\mathbb{Z}$. It is easy to see that pointwise periodic homeomorphisms are distal, since the orbit of every point is finite. Also, {\it equicontinuous} homeomorphisms are distal.

Recall that a homeomorphism $f:X\to X$ is equicontinuous if for all $\varepsilon>0$ there is a $\delta>0$ such that if $x,y\in X$, $d(x,y)<\delta$, then $d(f^n(x),f^n(y))<\varepsilon$, for all $n\in\mathbb{Z}$. For an equicontinuous homeomorphism on a compact metric space there is a compatible metric that makes it an isometry - $d'(x,y)=\sup_{n\in\mathbb{Z}}d(f^n(x),f^n(y))$. Therefore, we can conclude that the following proposition holds.
\begin{prop}\label{equi}
Let $f:X\to X$ be an equicontinuous homeomorphism on a compact metric $X$. Then $h_{\mathrm{pol}}(f)=0$.
\end{prop}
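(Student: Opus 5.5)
The statement to prove is Proposition~\ref{equi}: an equicontinuous homeomorphism has zero polynomial entropy. I will reduce to the case of an isometry, using property (4), and then bound $\mathrm{span}(n,\varepsilon)$ independently of $n$.

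First, define $d'(x,y)=\sup_{n\in\mathbb{Z}}d(f^n(x),f^n(y))$ and check that it is a metric compatible with the topology of $X$. It clearly satisfies the metric axioms and $d'\geq d$. So it remains to show that every $d'$-ball contains a $d$-ball. Given $\varepsilon>0$, equicontinuity (with respect to all $n\in\mathbb{Z}$) provides $\delta>0$ such that $d(x,y)<\delta$ implies $d(f^n(x),f^n(y))<\varepsilon/2$ for all $n$. Hence $d'(x,y)\leq\varepsilon/2<\varepsilon$, and the two metrics induce the same topology. Moreover, $d'(f(x),f(y))=d'(x,y)$ by reindexing the supremum, so $f$ is a $d'$-isometry. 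By property (4), $h_{\mathrm{pol}}(f)$ may be computed with $d'$.

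With respect to $d'$, the dynamic metric satisfies $d_n'^f(x,y)=\max_{0\le k\le n-1}d'(f^k(x),f^k(y))=d'(x,y)$ for every $n$. Therefore an $(n,\varepsilon)$-separated set for $d'$ is simply an $\varepsilon$-separated set in $(X,d')$. By compactness, such a set has cardinality at most some $N(\varepsilon)<\infty$, independent of $n$. It follows that $\log\mathrm{sep}(n,\varepsilon)/\log n\le \log N(\varepsilon)/\log n\to 0$ as $n\to\infty$, so each $\limsup$ vanishes and hence so does the limit as $\varepsilon\to0$. This gives $h_{\mathrm{pol}}(f)=0$.

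The only step requiring any care is the compatibility of $d'$ with the topology. This is exactly where equicontinuity in both time directions is used; the rest is immediate.
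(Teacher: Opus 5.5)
Your proof is correct and takes the same approach as the paper. The paper also passes to the compatible metric $d'(x,y)=\sup_{n\in\mathbb{Z}}d(f^n(x),f^n(y))$, in which $f$ is an isometry, and concludes via the metric-independence of $h_{\mathrm{pol}}$; you additionally supply the details the paper leaves implicit, namely compatibility of $d'$ from two-sided equicontinuity and the $n$-independent bound on $\mathrm{sep}(n,\varepsilon)$.
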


It is known that distal homeomorphisms do not necessarily have zero polynomial entropy, as is shown by an example in \cite{AOM}. However, the constructed example is a distal homeomorphism on a space which is not a manifold, nor is it connected. We may ask ourselves is it possibly true that distal homeomorphisms on connected manifolds have zero polynomial entropy. The answer is negative, as we see from the following proposition:

\begin{prop}\label{prop:torus}
Let $f:\mathbb{T}^2\to\mathbb{T}^2$, $f(x,y)=(x+y\,\mathrm{mod}\,1,y)$. Then $f$ is distal, and $h_{\mathrm{pol}}(f)=1$.
\end{prop}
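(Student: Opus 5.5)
Distality and the entropy bound are proved separately; the entropy bound splits into an upper bound from a covering argument and a lower bound from an explicit separated set.

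\emph{Distality.} Take $(x,y)\neq(x',y')$ and compute
$$f^n(x,y)=(x+ny,\,y),\qquad f^n(x',y')=(x'+ny',\,y').$$
If $y\neq y'$, the second coordinates always stay at the fixed circle distance $d(y,y')>0$, for every $n\in\mathbb{Z}$. If $y=y'$, then $x\neq x'$ and the first coordinates always differ by the constant $x-x'\bmod 1$, which is nonzero. Either way the distance between the orbits is bounded below by a positive constant uniform in $n$, so $f$ is distal.

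\emph{Upper bound $h_{\mathrm{pol}}(f)\le 1$.} Use $\mathrm{cov}(n,\varepsilon)$ with the max metric on $\mathbb{T}^2$. Partition the $y$-circle into intervals of length $\varepsilon/(2n)$, giving about $2n/\varepsilon$ intervals. Partition the $x$-circle into intervals of length $\varepsilon/2$, giving about $2/\varepsilon$ intervals. Take the product sets $A=I\times J$ with $I$ an $x$-interval and $J$ a $y$-interval. For two points of $A$ and $0\le k\le n-1$:
\begin{itemize}
\item the first coordinates of their $f^k$-images differ by at most $\varepsilon/2+k\,\varepsilon/(2n)<\varepsilon$;
\item the second coordinates differ by at most $\varepsilon/(2n)$.
\end{itemize}
Hence each such set has $d_n^f$-diameter less than $\varepsilon$, and
$$\mathrm{cov}(n,\varepsilon)\le C\,n\,\varepsilon^{-2}.$$
Taking $\log$, dividing by $\log n$ and letting $n\to\infty$ gives an upper bound of $1$.

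\emph{Lower bound $h_{\mathrm{pol}}(f)\ge 1$.} Use a separated set. Fix $\varepsilon<1/4$ and consider the points $(0,j/n)$ for $j=0,\dots,n-1$. For $j\neq j'$ the first coordinates of the $k$-th iterates are $kj/n$ and $kj'/n$, which differ by $k(j-j')/n \bmod 1$. Take $m=j-j'$ with $0<|m|<n$. Choose $k\approx n/(2|m|)$; then $k\le n/2\le n-1$ for $n\ge2$, and $k|m|/n$ is within $|m|/n\le 1/2$ of $1/2$. This crude estimate is not enough on its own. Instead, as $k$ runs over $0,\dots,n-1$ the values $km/n \bmod 1$ advance by steps $|m|/n<1$, so they cannot skip the interval $[1/4,3/4]$ of length $1/2$. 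Since they are not all near $0$, some $k$ puts $km/n\bmod 1$ in $[1/4,3/4]$, which gives circle distance at least $1/4>\varepsilon$. Thus $\mathrm{sep}(n,\varepsilon)\ge n$, and so $h_{\mathrm{pol}}(f)\ge1$.

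\emph{Main obstacle.} The only delicate step is the separation claim for every pair $j\neq j'$, i.e.\ checking that the sequence $km/n \bmod 1$ for $k<n$ actually reaches $[1/4,3/4]$. I would handle it by taking $k=\lceil n/(4|m|)\rceil$, which gives $km/n\in[1/4,\,1/4+|m|/n]$ in absolute value. When $|m|\le n/2$ this lies in $[1/4,3/4]$ and $k<n$. When $|m|>n/2$, I would use $-m\equiv n-m$, because $km/n \bmod 1$ depends only on $m \bmod n$; since $n-|m|<n/2$, this reduces to the previous case.
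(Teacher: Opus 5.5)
Your proof is correct, but for the entropy it takes a different route from the paper. The distality part matches the paper's two-case argument.

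\textbf{Entropy.} For $h_{\mathrm{pol}}(f)=1$ the paper does no counting at all. It states that for $f(x,y)=(x+g(y),y)$ on $\mathbb{T}^n\times\mathbb{T}^n$ with $g\in C^1$ one has $h_{\mathrm{pol}}(f)=\max_y \mathrm{rank}\, dg(y)$. It proves this by covering $\mathbb{T}^n$ with finitely many closed balls $B_j$, noting that each $\mathbb{T}^n\times B_j$ is invariant, applying Proposition~2.6 of \cite{M1} on each piece, and then using the finite union property. Taking $g=\mathrm{Id}$ gives $1$.

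You instead estimate directly. The shear spreads the first coordinates by at most $k\,d_{\mathbb{S}^1}(y_1,y_2)$, which gives $\mathrm{cov}(n,\varepsilon)\le Cn\varepsilon^{-2}$. The points $(0,j/n)$ give $\mathrm{sep}(n,\varepsilon)\ge n$ for $\varepsilon<1/4$.

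\textbf{Comparison.} The paper's route is shorter and yields the rank formula in every dimension, but it rests on an external theorem. Yours is self-contained and gives explicit bounds. It is also exactly the mechanism the paper itself uses later for the stacked disks: the $\varepsilon/(2n)$-spacing in the auxiliary lemma used for Proposition~\ref{prop:upper}, and the doubling lemma in the proof of Proposition~\ref{prop:lower}.

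\textbf{One presentational point.} Delete the sentence saying that the values $km/n \bmod 1$ advance by steps $|m|/n<1$ and therefore cannot skip $[1/4,3/4]$. As stated it is not a valid argument: a step longer than $1/2$ can jump over an interval of length $1/2$, and one also needs enough steps to reach it. What actually proves the separation is your final paragraph, and it is correct. The choice $k=\lceil n/(4|m|)\rceil$ works for $0<|m|\le n/2$. For $|m|>n/2$, the identity $d_{\mathbb{S}^1}(0,km/n)=d_{\mathbb{S}^1}(0,k(n-|m|)/n)$ reduces to that case.
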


The proof follows immediately from the following easy consequence of Proposition 2.6 in~\cite{M1}.

\begin{prop}
Let $f:\mathbb{T}^n\times \mathbb{T}^n\to\mathbb{T}^n\times \mathbb{T}^n$, $f(x,y)=(x+g(y),y)$, where $g:\mathbb{T}^n\to\mathbb{T}^n$ is $C^1$. Then 
$$h_{\mathrm{pol}}(f)=\max_{y\in\mathbb{T}^n}\mathrm{rang}\,g(y).$$
\end{prop}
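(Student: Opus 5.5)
The statement to prove is the last proposition: $f(x,y)=(x+g(y),y)$ on $\mathbb{T}^n\times\mathbb{T}^n$ has $h_{\mathrm{pol}}(f)=\max_{y}\mathrm{rang}\,g(y)$, where $\mathrm{rang}\,g(y)$ is the rank of the differential $Dg(y)$. The paper says this follows from Proposition 2.6 in \cite{M1}, so I would first reduce to that result and then check its hypotheses. I recall that proposition as computing the polynomial entropy of "action-angle" type maps on $\mathbb{T}^n\times U$, where the fibres over $y$ are invariant tori rotated by a frequency $\omega(y)$. The answer there is the maximal rank of $D\omega$. Here $\omega=g$, $U=\mathbb{T}^n$ is compact without boundary, and $f$ preserves each torus $\mathbb{T}^n\times\{y\}$, rotating it by $g(y)$. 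So the statement follows once Mañé-type maps with compact base are covered, or once one localizes to finitely many coordinate charts in $y$ and uses the finite union property (2).

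If I argue directly instead, the iterates are $f^k(x,y)=(x+kg(y),y)$. Take the product metric and write $r=\max_y \mathrm{rang}\,Dg(y)$.

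\textbf{Upper bound.} Since $g$ is $C^1$, it is Lipschitz with constant $L$, so
$$d_n^f\big((x,y),(x',y')\big)\le d(x,x')+nL\,d(y,y')+d(y,y').$$
This naive estimate only gives a bound of order $n^{n}$ for $\mathrm{cov}(n,\varepsilon)$, which is too weak. To improve it, I would cover the base near each point by sets on which $g$ is almost affine with rank at most $r$. By the constant rank/Sard-type structure, $g$ locally factors through an $r$-dimensional submanifold up to an error $o(\text{size})$. A $d_n$-ball then needs size about $\varepsilon/n$ only in $r$ directions, but size $\varepsilon$ in the remaining $n-r$ directions, because there $Dg$ is small and the error stays below $\varepsilon$. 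This yields $\mathrm{cov}(n,\varepsilon)\lesssim C(\varepsilon)\,n^{r}$ and hence $h_{\mathrm{pol}}\le r$. Making this uniform near points where the rank drops is the main obstacle, and it is exactly what Proposition 2.6 of \cite{M1} handles. I would quote it rather than redo it.

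\textbf{Lower bound.} Pick $y_0$ where the rank equals $r$. Rank is lower semicontinuous, so it equals $r$ on a neighbourhood. By the constant rank theorem there is an $r$-dimensional disc $D$ through $y_0$ on which $g$ is an embedding with bi-Lipschitz constant $c>0$. Now choose points $y_i\in D$ on a grid of spacing $\varepsilon/n$, giving about $n^r$ points, and consider $(0,y_i)$. For $i\neq j$ the values $kg(y_i)$ and $kg(y_j)$ separate by at least $\varepsilon$ for some $k<n$, using that $g(y_i)-g(y_j)$ is small and $k$ ranges up to $n$. This gives $\mathrm{sep}(n,\varepsilon)\gtrsim n^r$ and so $h_{\mathrm{pol}}\ge r$.

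For the torus corollary, Proposition \ref{prop:torus}, take $n=1$ and $g=\mathrm{id}$, which has rank $1$, so $h_{\mathrm{pol}}(f)=1$. Distality there is immediate: $d(f^k(x,y),f^k(x',y'))\ge d(y,y')$, and if $y=y'$ then $f$ is an isometry on that fibre.
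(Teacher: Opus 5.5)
Your reduction is the paper's own: cover the $y$-torus by finitely many closed balls, use the finite union property (2), and quote Proposition~2.6 of \cite{M1}. Your lower bound, via the constant rank theorem at a point of maximal rank, is sound. The gap is the upper bound, and you cannot defer it as a technicality, because for merely $C^1$ maps $g$ it is false. Neither your heuristic that $g$ ``locally factors through an $r$-dimensional submanifold up to an error $o(\mathrm{size})$'' nor any citation can supply it.

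To see this, note that your lower-bound argument does not need the disc. Use the sup-metric on $\mathbb{T}^n$ and the max-metric on the product. Let $g(y_1),\dots,g(y_M)$ be a maximal $\varepsilon/n$-separated subset of $g(\mathbb{T}^n)$. For $i\neq j$, some coordinate of $g(y_i)-g(y_j)$ lies at circle-distance $\alpha\in[\varepsilon/n,1/2]$ from $0$. Take $k=1$ if $\alpha\geq\varepsilon/2$, and $k=\lceil\varepsilon/(2\alpha)\rceil\leq n-1$ otherwise. Then $d\bigl(f^k(0,y_i),f^k(0,y_j)\bigr)\geq\varepsilon/2$ for $n\geq 4$ and $\varepsilon<1/2$. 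Hence $h_{\mathrm{pol}}(f)\geq\dim_B g(\mathbb{T}^n)$. The obvious cover by sets $W\times(U\cap g^{-1}(V))$, with $\mathrm{diam}\,W,\mathrm{diam}\,U<\varepsilon$ and $\mathrm{diam}\,V<\varepsilon/n$, gives equality.

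Now take Kaufman's $C^1$ map of $[0,1]^3$ onto $[0,1]^2$ whose derivative has rank at most $1$ everywhere (R.~Kaufman, \emph{A singular map of a cube onto a square}, J.\ Differential Geom.\ 14, 1979). Compose it with a smooth surjection $\mathbb{T}^3\to[0,1]^3$, such as $y\mapsto\bigl((1-\cos 2\pi y_l)/2\bigr)_{l=1,2,3}$, and then with the embedding $(u,v)\mapsto(u/2,v/2,0)$ into $\mathbb{T}^3$. The result is a $C^1$ map $g:\mathbb{T}^3\to\mathbb{T}^3$ with $\max_y\mathrm{rang}\,g(y)=1$ whose image is a square, so $h_{\mathrm{pol}}(f)\geq 2$.

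So the difficulty you located at the rank-dropping set is real and cannot be repaired in the $C^1$ category. The correct general formula is $h_{\mathrm{pol}}(f)=\dim_B g(\mathbb{T}^n)$, which is the same mechanism as in Propositions \ref{prop:upper} and \ref{prop:lower}. It equals $\max_y\mathrm{rang}\,g(y)$ only under extra hypotheses, for instance $g\in C^\infty$ (where Yomdin-type bounds on the entropy of critical values give $\dim_B g(\mathbb{T}^n)\leq r$) or $n=1$.

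The paper's one-line proof relies on the same citation and inherits the same defect. It is harmless for Proposition \ref{prop:torus}, where $n=1$ and $g=\mathrm{id}$.
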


\noindent {\it Proof.} Note that for any set $K\subset\mathbb{T}^n$, the set $\mathbb{T}^n\times K$ is $f$-invariant. Let us cover $\mathbb{T}^n$ with $k$ sets $B_j$, diffeomorphic to the unit $n$-dimensional closed ball $\mathbb{B}$. Then
$$h_{\mathrm{pol}}(f)=\max\{h_{\mathrm{pol}}(f|_{B_j})\mid j\in\{1,\ldots,k\}\}.$$
Now by applying Proposition 2.6 in~\cite{M1}, we have
$$h_{\mathrm{pol}}(f)=\max\{\max_{y\in B_j}\mathrm{rang}\,g(y)\mid j\in\{1,\ldots,k\}\}=\max_{y\in\mathbb{T}^n}\mathrm{rang}\,g(y).$$

\qed

\noindent{\it Proof of Proposition \ref{prop:torus}.} Since $g(y)=y$ is the identity map, it is clear that $h_{\mathrm{pol}}(f)=1$. Let us show that $f$ is indeed distal. 

Take two distinct points $p=(x_1,y_1)$ and $q=(x_2,y_2)$. If $y_1\neq y_2$, then $$\inf\limits_{n\in\mathbb{N}}(f^n(p),f^n(q))\geqslant d_{\mathbb{S}^1}(y_1,y_2)>0.$$ 

If $y_1=y_2=y$, then $$\inf\limits_{n\in\mathbb{N}} d(f^n(p),f^n(q))=\inf\limits_{n\in\mathbb{N}} d((x_1+ny\,\mathrm{mod}\,1,y),(x_2+ny\,\mathrm{mod}\,1,y))=d_{\mathbb{S}^1}(x_1,x_2)>0.$$
\qed

Similarly to the discussion about distal dynamical systems, we can wonder if zero polynomial entropy on a continuum implies that the system is equicontinuous. This is not the case, as we will see after introducing the techniques needed to prove the result (see Proposition \ref{prop:ekvi}).

\section{Main results}\label{subsec:main}

We will begin by treating a slightly more general problem, and then constructing the example which proves the main Theorem \ref{main} as a corollary. We will make a connection between the polynomial entropy and the ball-dimension, for a specific family of spaces. In order to facilitate the computation of $\mathrm{sep}(n,\varepsilon)$ and $\mathrm{span}(n,\varepsilon)$, we introduce a special metric (equivalent to the Euclidean metric) defined by polar coordinates. This is a modification of the {\it polar taxicab distance} defined in~\cite{KPKK} (see also references therein).

\subsection{Metric}

Let $\mathbb{S}^1=\mathbb{R}/\mathbb{Z}$. Define $d_{\mathbb{S}^1}$ in the standard way. More precisely,
let $\varphi,\psi\in\mathbb{S}^1$, and let $t,s\in[0,1)$ be the representatives of the corresponding equivalence classes. Define
$$d_{\mathbb{S}^1}(\varphi,\psi):=\min\{|t-s|,1-|t-s|\}.$$ 

We now define a metric on the disk $D$. Let $(\rho,\varphi)$ denote polar coordinates on $D$ and let $x=(\rho_1\cos(2\pi\varphi_1),\rho_1\sin(2\pi\varphi_1))$, $y=(\rho_2\cos(2\pi\varphi_2),\rho_2\sin(2\pi\varphi_2))$. Define
$$d_D(x,y):=|\rho_1-\rho_2|+\min\{\rho_1,\rho_2\}d_{\mathbb{S}^1}(\varphi_1,\varphi_2).$$ Although polar coordinates are not globally defined, the metric $d_D$ is well defined. Indeed, the point $(0,0)$ (in Euclidean coordinates) corresponds to $(0,\varphi)$ in polar coordinates for every $\varphi\in\mathbb{S}^1$. If $x=(0,\varphi)$ and $y=(\rho_2,\psi)$ (in polar coordinates), then:
$$d_D(x,y)=\rho_2$$ which does not depend on $\varphi$.

\begin{prop} $d_D$ is a metric on $D$.
\end{prop}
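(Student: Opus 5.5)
We verify the metric axioms directly from the definition
$$d_D(x,y)=|\rho_1-\rho_2|+\min\{\rho_1,\rho_2\}\,d_{\mathbb{S}^1}(\varphi_1,\varphi_2).$$
Two preliminary facts are used throughout. First, $d_{\mathbb{S}^1}$ is a metric on $\mathbb{S}^1$ with values in $[0,\tfrac12]$. Second, $d_D$ is well defined, as already observed: at the origin the angle is irrelevant because the factor $\min\{\rho_1,\rho_2\}$ vanishes.

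\textbf{Nonnegativity, symmetry, identity.} Nonnegativity and symmetry are immediate. For the identity of indiscernibles, suppose $d_D(x,y)=0$. Then $\rho_1=\rho_2=\rho$ and $\rho\, d_{\mathbb{S}^1}(\varphi_1,\varphi_2)=0$. So either $\rho=0$, in which case both points are the origin, or $\varphi_1=\varphi_2$, in which case $x=y$.

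\textbf{Triangle inequality.} This is the main step. Take three points $x_i=(\rho_i,\varphi_i)$, $i=1,2,3$, and write $\delta_{ij}=d_{\mathbb{S}^1}(\varphi_i,\varphi_j)$. We must show
$$|\rho_1-\rho_3|+m_{13}\,\delta_{13}\leqslant |\rho_1-\rho_2|+m_{12}\,\delta_{12}+|\rho_2-\rho_3|+m_{23}\,\delta_{23},$$
where $m_{ij}=\min\{\rho_i,\rho_j\}$.

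\emph{Case 1: $\rho_2\geqslant m_{13}$.} Then $m_{12}\geqslant m_{13}$ and $m_{23}\geqslant m_{13}$. Using the triangle inequality for $d_{\mathbb{S}^1}$,
$$m_{13}\,\delta_{13}\leqslant m_{13}(\delta_{12}+\delta_{23})\leqslant m_{12}\,\delta_{12}+m_{23}\,\delta_{23}.$$
Adding $|\rho_1-\rho_3|\leqslant|\rho_1-\rho_2|+|\rho_2-\rho_3|$ finishes this case.

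\emph{Case 2: $\rho_2<m_{13}$.} Say $\rho_1\leqslant\rho_3$, so that $\rho_2<\rho_1\leqslant\rho_3$. We bound the angular term crudely by $m_{13}\,\delta_{13}\leqslant \rho_1\cdot\tfrac12$. Meanwhile the radial part of the right-hand side is
$$(\rho_1-\rho_2)+(\rho_3-\rho_2)=(\rho_3-\rho_1)+2(\rho_1-\rho_2).$$
Hence it suffices to show
$$\tfrac12\rho_1\leqslant 2(\rho_1-\rho_2)+\rho_2(\delta_{12}+\delta_{23}).$$
Write $\rho_1=\rho_2+t$ with $t>0$. The required inequality becomes
$$\tfrac12\rho_2+\tfrac12 t\leqslant 2t+\rho_2(\delta_{12}+\delta_{23}).$$
This reduces to $\rho_2\bigl(\tfrac12-\delta_{12}-\delta_{23}\bigr)\leqslant\tfrac32 t$, which is not automatic when the angles are close.

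So the crude bound must be refined. Instead use $\delta_{13}\leqslant\delta_{12}+\delta_{23}$ on the part of $m_{13}=\rho_1=\rho_2+t$ coming from $\rho_2$:
$$\rho_1\,\delta_{13}=\rho_2\,\delta_{13}+t\,\delta_{13}\leqslant\rho_2(\delta_{12}+\delta_{23})+\tfrac12 t.$$
The right-hand side here is at most $\rho_2(\delta_{12}+\delta_{23})+2t$, which is exactly what we needed. The symmetric subcase $\rho_3\leqslant\rho_1$ is identical with the roles of $x_1$ and $x_3$ exchanged.

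\textbf{Where the difficulty lies.} The only delicate point is Case 2, where the intermediate point lies closer to the origin than both endpoints. The fix is to split $\min\{\rho_1,\rho_3\}$ as $\rho_2$ plus an excess. The excess is paid for by the doubled radial detour, using the bound $\delta\leqslant\tfrac12$. The $\rho_2$ part is handled by the circle triangle inequality. Degenerate cases with some $\rho_i=0$ fit into the same inequalities, since well-definedness lets us choose any angle at the origin.
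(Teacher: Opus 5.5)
Your proof is correct and is essentially the paper's argument. Your Case 1 (intermediate radius $\rho_2\geqslant\min\{\rho_1,\rho_3\}$) merges the paper's cases (a) and (b) into a single monotonicity-of-$\min$ step. Your Case 2 is the paper's case (c): the split $\rho_1=\rho_2+t$ proves the same rearranged inequality $\rho_2(2-\delta_{12}-\delta_{23})\leqslant\rho_1(2-\delta_{13})$, using the same two ingredients, namely the circle triangle inequality and the boundedness of $d_{\mathbb{S}^1}$. The abandoned crude bound in Case 2 can simply be dropped from a final write-up.
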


\noindent{\it Proof.} It is obvious that $d_D$ is symmetric and non-degenerate.

Let us prove the triangle inequality. For $x_j=(\rho_j,\varphi_j)$ want to show
$$d(x_1,x_2)\leqslant d(x_1,x_3)+d(x_3,x_2)$$ or, equivalently
\begin{equation}\label{eq:triangle}
\begin{aligned}
&|\rho_1-\rho_2|+\min\{\rho_1,\rho_2\}d_{\mathbb{S}^1}(\varphi_1,\varphi_2)\leqslant\\
&|\rho_1-\rho_3|+\min\{\rho_1,\rho_3\}d_{\mathbb{S}^1}(\varphi_1,\varphi_3)+|\rho_3-\rho_2|+\min\{\rho_3,\rho_2\}d_{\mathbb{S}^1}(\varphi_3,\varphi_2).\end{aligned}
\end{equation}

We will derive the proof by examining the order of the points $\rho_1,\rho_2$ and $\rho_3$.
Since the equation~(\ref{eq:triangle}) is symmetric in $\rho_1$ and $\rho_2$, we only consider the position of $\rho_3$ with respect to $\rho_1$ and $\rho_2$. More precisely, we distinguish between three cases:
\begin{itemize}
\item[(a)] $\rho_1\leqslant\rho_3\leqslant\rho_2$
\item[(b)] $\rho_1\leqslant\rho_2\leqslant\rho_3$
\item[(c)] $\rho_3\leqslant\rho_1\leqslant\rho_2$.
\end{itemize}
Suppose the order (a) holds. Then we have
$$\begin{aligned}
&d_D(x_1,x_2)=|\rho_1-\rho_2|+\rho_1d_{\mathbb{S}^1}(\varphi_1,\varphi_2)\\
&\stackrel{(\heartsuit)}\leqslant |\rho_1-\rho_3|+|\rho_3-\rho_2|+\rho_1\big[d_{\mathbb{S}^1}(\varphi_1,\varphi_3)+d_{\mathbb{S}^1}(\varphi_2,\varphi_3)\big]\\
&\stackrel{(\clubsuit)}{\leqslant}|\rho_1-\rho_3|+|\rho_3-\rho_2|+\rho_1d_{\mathbb{S}^1}(\varphi_1,\varphi_3)+\rho_3d_{\mathbb{S}^1}(\varphi_2,\varphi_3)\\
 &\stackrel{(\diamondsuit)}{=}|\rho_1-\rho_3|+|\rho_3-\rho_2|+\min\{\rho_1,\rho_3\}d_{\mathbb{S}^1}(\varphi_1,\varphi_3)+\min\{\rho_2,\rho_3\}
 d_{\mathbb{S}^1}(\varphi_2,\varphi_3)\\ &=
d_D(x_1,x_3)+d_D(x_3,x_2).\end{aligned}$$ The inequality $(\heartsuit)$ is the triangle inequalities for $|\cdot|$ in $\mathbb{R}$ and $d_{\mathbb{S}^1}$, while $(\clubsuit)$ and $(\diamondsuit)$ hold because of the presumed order $\rho_1\leqslant\rho_3\leqslant\rho_2$.

The case (b) is treated similarly:
$$\begin{aligned}
&d_D(x_1,x_2)=|\rho_1-\rho_2|+\rho_1d_{\mathbb{S}^1}(\varphi_1,\varphi_2)\\
&\leqslant |\rho_1-\rho_3|+|\rho_3-\rho_2|+\rho_1\big[d_{\mathbb{S}^1}(\varphi_1,\varphi_3)+d_{\mathbb{S}^1}(\varphi_2,\varphi_3)\big]\\
&\leqslant|\rho_1-\rho_3|+|\rho_3-\rho_2|+\rho_1d_{\mathbb{S}^1}(\varphi_1,\varphi_3)+\rho_2d_{\mathbb{S}^1}(\varphi_2,\varphi_3)\\
 &{=}|\rho_1-\rho_3|+|\rho_3-\rho_2|+\min\{\rho_1,\rho_3\}d_{\mathbb{S}^1}(\varphi_1,\varphi_3)+\min\{\rho_2,\rho_3\}
 d_{\mathbb{S}^1}(\varphi_2,\varphi_3)\\ &=
d_D(x_1,x_3)+d_D(x_3,x_2).\end{aligned}$$

Let us prove the case (c). Suppose $\rho_3\leqslant\rho_1\leqslant\rho_2$, so the inequality~(\ref{eq:triangle}) becomes:
$$
\rho_2-\rho_1+\rho_1d_{\mathbb{S}^1}(\varphi_1,\varphi_2)\leqslant
\rho_1-\rho_3+\rho_3d_{\mathbb{S}^1}(\varphi_1,\varphi_3)+\rho_2-\rho_3+\rho_3d_{\mathbb{S}^1}(\varphi_3,\varphi_2),$$
or, equivalently:
$$\rho_3\left[2-d_{\mathbb{S}^1}(\varphi_1,\varphi_3)-d_{\mathbb{S}^1}(\varphi_3,\varphi_2)\right]\leqslant\rho_1\left[2-d_{\mathbb{S}^1}(\varphi_1,\varphi_2)\right].$$ The last inequality obviously holds since:
$$\rho_3\leqslant\rho_1,\quad d_{\mathbb{S}^1}(\varphi_1,\varphi_2)\leqslant d_{\mathbb{S}^1}(\varphi_1,\varphi_3)+d_{\mathbb{S}^1}(\varphi_3,\varphi_2)\leqslant1<2.$$
\qed

\begin{prop} $d_D$ is (topologically) equivalent to the standard Euclidean metric $d_E$.
\end{prop}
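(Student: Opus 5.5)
The plan is to prove the stronger statement that $d_D$ and $d_E$ are Lipschitz equivalent on the disk $D$; topological equivalence then follows. Write $x=(\rho_1,\varphi_1)$ and $y=(\rho_2,\varphi_2)$ in polar coordinates. By symmetry of both metrics we may assume $\rho_1\leqslant\rho_2$, so that $d_D(x,y)=\rho_2-\rho_1+\rho_1 d_{\mathbb{S}^1}(\varphi_1,\varphi_2)$. Introduce the auxiliary point $z=(\rho_1,\varphi_2)$, which has the radius of $x$ and the angle of $y$. Then $x$ and $z$ lie on the circle of radius $\rho_1$, while $z$ and $y$ lie on the same ray from the origin. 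When $\rho_1=0$ we have $d_D(x,y)=\rho_2=d_E(x,y)$, so that case is trivial.

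\textbf{Upper bound for $d_E$.} By the Euclidean triangle inequality, $d_E(x,y)\leqslant d_E(x,z)+d_E(z,y)$. Here $d_E(z,y)=\rho_2-\rho_1$, and $d_E(x,z)$ is a chord of the circle of radius $\rho_1$ subtending the angle $2\pi d_{\mathbb{S}^1}(\varphi_1,\varphi_2)$. Since a chord is at most the corresponding arc, $d_E(x,z)\leqslant 2\pi\rho_1 d_{\mathbb{S}^1}(\varphi_1,\varphi_2)$. Together these give $d_E\leqslant 2\pi\, d_D$.

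\textbf{Reverse bound.} First, $\rho_2-\rho_1=\bigl||y|-|x|\bigr|\leqslant d_E(x,y)$. Second, the chord satisfies
$$d_E(x,z)=2\rho_1\sin\bigl(\pi d_{\mathbb{S}^1}(\varphi_1,\varphi_2)\bigr)\geqslant 4\rho_1 d_{\mathbb{S}^1}(\varphi_1,\varphi_2),$$
using $\sin t\geqslant 2t/\pi$ on $[0,\pi/2]$ and the fact that $d_{\mathbb{S}^1}\leqslant 1/2$. It remains to compare $d_E(x,z)$ with $d_E(x,y)$. This is the main (though mild) step: I claim $d_E(x,z)\leqslant 2\,d_E(x,y)$, because $d_E(x,z)\leqslant d_E(x,y)+d_E(y,z)=d_E(x,y)+(\rho_2-\rho_1)\leqslant 2d_E(x,y)$. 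Hence
$$\rho_1 d_{\mathbb{S}^1}(\varphi_1,\varphi_2)\leqslant \tfrac12 d_E(x,y),$$
and therefore $d_D(x,y)\leqslant \tfrac32 d_E(x,y)$.

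With $\tfrac{1}{2\pi}d_E\leqslant d_D\leqslant\tfrac32 d_E$, the identity map is bi-Lipschitz between $(D,d_E)$ and $(D,d_D)$. The two metrics therefore have the same open sets and the same convergent sequences, which gives the topological equivalence. The only real care needed is at the origin and in the choice of representatives for the angles, both of which are handled by working with $d_{\mathbb{S}^1}\in[0,1/2]$ directly.
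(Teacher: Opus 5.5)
Your proof is correct, but it takes a different route from the paper's. The paper argues qualitatively with sequences. Away from the origin, the angle $\varphi$ is a continuous function of the Euclidean coordinates, so convergence in either metric amounts to convergence of $(\rho_n,\varphi_n)$. At the origin both metrics reduce to $\rho_n$, since $d_D(x_n,0)=\rho_n=d_E(x_n,0)$.

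You instead prove the global bi-Lipschitz bounds $\frac{1}{2\pi}d_E\leqslant d_D\leqslant\tfrac32 d_E$. You do this with the auxiliary point $z=(\rho_1,\varphi_2)$, the chord formula $d_E(x,z)=2\rho_1\sin(\pi d_{\mathbb{S}^1}(\varphi_1,\varphi_2))$, and the estimate $d_E(x,z)\leqslant d_E(x,y)+(\rho_2-\rho_1)\leqslant 2d_E(x,y)$. Each step checks out. In particular, $d_{\mathbb{S}^1}\leqslant 1/2$ puts $\pi d_{\mathbb{S}^1}$ in $[0,\pi/2]$, which is where $\sin t\geqslant 2t/\pi$ is valid.

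The paper's argument is shorter, but it relies on the unstated local continuity of the polar chart and needs a separate case for the origin. Yours needs no case split: when $\rho_1=0$ you get $z=x$ and the general computation already covers it. It also gives a strictly stronger conclusion, namely that the identity map is bi-Lipschitz. That makes metric quantities such as covering numbers of the disk comparable up to constants in the two metrics. For the paper's purposes, though, topological equivalence is all that is needed, since polynomial entropy depends only on the topology.
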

\noindent{\it Proof.} We will show that convergent sequences in one metric are also convergent in the other, with the same limit. If the limit point $x$ is distinct from the origin, this is immediate, since the coordinate $\varphi$ is well defined as the continuous function of Euclidean coordinates in a neighbourhood of $x$. Let $x_n=(\rho_n,\varphi_n)$ be a sequence that converges to the origin in the Euclidean metric. Then we have
$$d_D(x_n,0)=\rho_n\to 0,\quad n\to\infty.$$ Conversely, from
$$|\rho_n|\leqslant d_D(x_n,0)$$ we obtain the other implication.\qed

\subsection{Upper and lower bounds for polynomial entropy via ball-dimension}

Let $(X,d)$ be a compact metric space and let $N(A,\delta)$ denote the minimal number of open balls of radius $\delta$ needed to cover a compact set $A\subset X$. The {\it ball-dimension} of the set $A$ is defined by:

$$\displaystyle{\dim}_B(A)=\limsup\limits_{\delta\to0}\frac{\log{N(A,\delta)}}{\log{(1/\delta)}}.$$

Ball-dimension, also known as the Minkowski dimension, is a way of assigning a dimension to spaces that do not have a well-defined dimension in the usual sense. It is a standard result that the ball dimension of an $n$-dimensional topological manifold equals $n$. So, for well-behaved spaces, the ball-dimension agrees with the usual notion of dimension. Let us compute the ball-dimension of the set defined in Example \ref{ex:bd}, since we will need it in the proof of our main Theorem \ref{main}. Similar computations for the set $\{\frac{1}{n^k}\mid n\in\mathbb{N}\}\cup\{0\}$, for some $k\in\mathbb{N}$, have been done. However, we will give detailed computations for the sake of completeness. For $x\in\mathbb{R}$, we denote by $[x]$ the floor function of $x$, which is equal to the greatest integer less than or equal to $x$. Also, let $\{x\}=x-[x]$ be the fractional part of $x$.

\begin{exam}\label{ex:bd} 
Let $a_k=\frac{1}{[k^\alpha]}$, for some $\alpha\in\mathbb{R}$, $\alpha>1$. Then the ball-dimension of the set $A:=\{a_k\mid k\in\mathbb{N}\}\cup\{0\}$ is equal to $\frac{1}{\alpha+1}$. 
\end{exam}
\noindent{\it Proof.} Let us first prove ${\dim}_B(A)\geqslant1/(\alpha+1)$.

Since $(k+1)^\alpha-k^\alpha=\alpha\xi_k^{\alpha-1}$, for some $\xi_k\in(k,k+1)$, and since $\{\cdot\}$ is bounded and $\alpha>1$, we have
$$\begin{aligned}
&\left[(k+1)^\alpha\right]-\left[k^\alpha\right]=(k+1)^\alpha-k^\alpha+\left\{(k+1)^\alpha\right\}-\left\{k^\alpha\right\}
=\\
&\alpha\xi_k^{\alpha-1}+\left\{(k+1)^\alpha\right\}-\left\{k^\alpha\right\}\sim \alpha k^{\alpha-1},
\end{aligned}$$ when $k\to\infty$, where $\sim$ denotes the asymptotic equivalence of two sequences.  
We compute
$$a_k-a_{k+1}=\frac{\left[(k+1)^\alpha\right]-\left[k^\alpha\right]}{\left[(k+1)^\alpha\right]\left[k^\alpha\right]}\sim\frac{\alpha k^{\alpha-1}}{k^{2\alpha}}=\frac{\alpha}{k^{\alpha+1}},$$ when $k\to\infty$.
Choose $k_0\in\mathbb{N}$, such that, for $k\geqslant k_0$
$$
a_k-a_{k+1}\geqslant\frac{1}{2}\frac{\alpha}{k^{\alpha+1}}.
$$
Therefore, for $k\geqslant k_0$ and every $\varepsilon>0$, we have:
$$\frac{\alpha}{k^{\alpha+1}}\geqslant2\varepsilon\quad\Rightarrow\quad a_k-a_{k+1}\geqslant\varepsilon,$$ or, equivalently
$$k\in\left[k_0,(\alpha/2\varepsilon)^{\frac{1}{\alpha+1}}\right]\quad\Rightarrow\quad a_k-a_{k+1}\geqslant\varepsilon.$$
We conclude that at least 
$$\left[(\alpha/2\varepsilon)^{\frac{1}{\alpha+1}}\right]-k_0$$ intervals are needed to cover the subset 
$$\left\{a_k\mid k\in\left[k_0,(\alpha/2\varepsilon)^{\frac{1}{\alpha+1}}\right]\right\}\subset A.$$
Therefore $N(A,\varepsilon)\geqslant\left[(\alpha/2\varepsilon)^{\frac{1}{\alpha+1}}\right]-k_0$ so
$${\dim}_B(A)\geqslant\lim_{\varepsilon\to 0}\frac{\log\left(\left[(\alpha/2\varepsilon)^{\frac{1}{\alpha+1}}\right]-k_0\right)}{\log(1/\varepsilon)}=\frac{1}{\alpha+1}.$$

We now estimate ${\dim}_B(A)$ from above. For a fixed $\varepsilon\in(0,\alpha/2)$, let $k_1:=\left[\left(\frac{\alpha}{2\varepsilon}\right)^\frac{1}{\alpha+1}\right]$. The set $\{a_k\mid k\leqslant k_1\}$ can be covered by $k_1$ intervals. Since $[x]\geqslant x/2$ for $x>1$, we have
$$a_{k_1}=\frac{1}{\left[\left(\frac{\alpha}{2\varepsilon}\right)^\frac{\alpha}{\alpha+1}\right]}\leqslant\frac{1}{\frac12\left(\frac{\alpha}{2\varepsilon}\right)^\frac{\alpha}{\alpha+1}}=2\left(\frac{2\varepsilon}{\alpha}\right)^\frac{\alpha}{\alpha+1}.$$ We can cover the interval 
$$\left[0,2\left(\frac{2\varepsilon}{\alpha}\right)^\frac{\alpha}{\alpha+1}\right]$$ by
$$\frac{2\left(\frac{2\varepsilon}{\alpha}\right)^\frac{\alpha}{\alpha+1}}{\varepsilon}+1=c(\alpha)\varepsilon^{-\frac{1}{\alpha+1}}+1$$ intervals of length $\varepsilon$, where $c(\alpha)$ depends only on $\alpha$ and not on $\varepsilon$. Therefore we conclude
$$N(A,\varepsilon)\leqslant k_1+c(\alpha)\varepsilon^{-\frac{1}{\alpha+1}}+1\leqslant \left(\frac{\alpha}{2\varepsilon}\right)^\frac{1}{\alpha+1}
+c(\alpha)\varepsilon^{-\frac{1}{\alpha+1}}+1=c_1(\alpha)\varepsilon^{-\frac{1}{\alpha+1}}+1.$$ So we obtain
$${\dim}_B(A)=\limsup_{\varepsilon\to 0}\frac{\log N(A,\varepsilon)}{-\log\varepsilon}\leqslant\limsup_{\varepsilon\to 0}\frac{\log(c_1(\alpha)\varepsilon^{-\frac{1}{\alpha+1}}+1)}{-\log\varepsilon}=\frac{1}{\alpha+1}.$$
\qed

\begin{prop}\label{prop:upper}
Let $A\subset[0,1]$ be a compact set such that $0\in A$. We denote by $D_a$ a unit disk in a horizontal plane at height $a$, and with $I$ a vertical line segment which connects the centers of all disks. We define $f:X\to X$ to be a rotation by the angle $a$ on each disk $D_a$, $a\neq0$, $f|_{D_a}:=R_a$, and the identity map on $I$ and $D_{0}$, $f|_{I}:=\mathrm{Id}_I$, $f|_{D_{0}}:=\mathrm{Id}_{D_0}$. Let
$$X:=\bigg(\bigcup_{a\in A}D_a\bigg)\cup I,$$
with metric $d$ defined in the following way:
$$d((x,a_1),(y,a_2))=d_D(x,y)+|a_1-a_2|.$$
Then $h_{\mathrm{pol}}(f,X)\leqslant \dim_{B}(A)$.
\end{prop}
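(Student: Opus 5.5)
We bound $\mathrm{span}(n,\varepsilon)$ from above by an explicit cover of $X$ by $d_n^f$-balls of radius comparable to $\varepsilon$. The point is that the rotation $R_a$ is an isometry of $d_D$, since it only shifts the angle coordinate. Hence points on the same disk $D_a$ stay at the same $d_D$-distance under iteration. Points on different disks $D_a$, $D_b$ separate only through the difference of rotation angles: the angle difference after $k$ steps is $k(a-b)$ mod $1$. This contributes at most $\rho\, d_{\mathbb{S}^1}(\cdot)\leqslant 1/2$ to the distance, and the height term $|a-b|$ is fixed. So we expect $\mathrm{span}(n,\varepsilon)\lesssim N(A,\varepsilon/n)\cdot C(\varepsilon)$, where $C(\varepsilon)$ does not depend on $n$. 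This gives
$$\frac{\log\mathrm{span}(n,\varepsilon)}{\log n}\lesssim \frac{\log N(A,\varepsilon/n)}{\log(n/\varepsilon)}\cdot\frac{\log(n/\varepsilon)}{\log n}+o(1),$$
whose $\limsup$ in $n$ is at most $\dim_B(A)$.

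Concretely, fix $\varepsilon>0$ and $n$. First cover $A$ by $N=N(A,\varepsilon/n)$ intervals of radius $\varepsilon/n$ with centers $b_1,\dots,b_N\in A$; we may take centers in $A$ at the cost of doubling the radius. Next fix a finite $\varepsilon$-net $P\subset D$ for $d_D$; its cardinality $C(\varepsilon)$ is independent of $n$. For a point $(x,a)$ with $|a-b_j|<\varepsilon/n$ and $d_D(x,p)<\varepsilon$ for some $p\in P$, compare with the point $(p,b_j)$. At time $k\leqslant n-1$,
$$d\big(f^k(x,a),f^k(p,b_j)\big)=d_D(R_{ka}x,R_{kb_j}p)+|a-b_j|\leqslant d_D(x,p)+d_D(R_{ka}p,R_{kb_j}p)+\varepsilon/n.$$
Here $d_D(R_{ka}p,R_{kb_j}p)\leqslant \rho_p\,d_{\mathbb{S}^1}(ka,kb_j)\leqslant k|a-b_j|<\varepsilon$. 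So the sum is at most about $3\varepsilon$. The segment $I$ consists of fixed points and is covered by $O(1/\varepsilon)$ small sets, since the centers lie in every disk anyway. Disks $D_a$ with $a$ near $0$ need no special treatment: they are included in the cover, with $D_0$ handled by taking $0\in A$ as one of the centers. Altogether $\mathrm{span}(n,3\varepsilon)\leqslant 2N(A,\varepsilon/(2n))\cdot C(\varepsilon)+O(1/\varepsilon)$.

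Taking logs and dividing by $\log n$, the factor $C(\varepsilon)$ and the additive term vanish as $n\to\infty$ for fixed $\varepsilon$. For the main term, write $\delta=\varepsilon/(2n)$, so that $\log n=\log(1/\delta)+\log(\varepsilon/2)$ and $\log n/\log(1/\delta)\to1$. Then $\limsup_n \log N(A,\delta)/\log n\leqslant\dim_B(A)$, and letting $\varepsilon\to0$ finishes the proof. The main obstacle is the estimate $d_{\mathbb{S}^1}(ka,kb)\leqslant k|a-b|$, which requires that the rotation "by angle $a$" mean rotation by $a$ in $\mathbb{R}/\mathbb{Z}$ units, consistent with $A\subset[0,1]$. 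One must also check that the weight $\min\{\rho_1,\rho_2\}\leqslant1$ in $d_D$ keeps the constants uniform. Both points follow directly from the definition of $d_D$.
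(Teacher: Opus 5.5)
Your proposal is correct and is essentially the paper's argument. Both place a fixed $d_D$-net of the disk at representative heights taken from a cover of $A$ at scale $\varepsilon/n$, use that rotations are $d_D$-isometries and that the angular drift after $k\le n-1$ steps is at most $k|a-b|<\varepsilon$, and so get $\mathrm{span}\lesssim C(\varepsilon)\,N(A,\varepsilon/(2n))$, which yields $\dim_B(A)$ in the limit. Your write-up is slightly more careful than the paper's: you choose the representative heights in $A$ (absorbing the factor $2$ in the radius) and you cover the fixed segment $I$ by $O(1/\varepsilon)$ extra sets, whereas the paper leaves both points implicit.
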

Before proceeding to prove this proposition, let us formulate and prove an auxiliary lemma.

\begin{lem}
Let $D_a$ and $D_b$ be two unit disks on heights $a$ and $b$, respectively, such that $|a-b|<\frac{\varepsilon}{2n}$, for some $n\in\mathbb{N}$ and $\varepsilon<1$. If the set $S=\{(x_1,a),(x_2,a),\ldots,(x_p,a)\}$ is an $\frac{\varepsilon}{2}$-net for the disk $D_a$ in metric $d$, then the same set $S$ is an $\varepsilon$-net for $D_a\cup D_b$ in the dynamic metric $d_n^f$.
\end{lem}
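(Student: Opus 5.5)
Given a point $(y,c)$ with $c\in\{a,b\}$, we look for some $(x_i,a)\in S$ such that $d(f^k(y,c),f^k(x_i,a))<\varepsilon$ for every $0\leqslant k\leqslant n-1$.

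First we record how the maps act on each level. The rotation $R_a$ (angle $a$, i.e. $\varphi\mapsto\varphi+a$ in the coordinates $\mathbb{S}^1=\mathbb{R}/\mathbb{Z}$) preserves $\rho$ and shifts all angles by the same amount. Hence it is an isometry of $d_D$, since $d_D$ depends only on $|\rho_1-\rho_2|$, $\min\{\rho_1,\rho_2\}$ and $d_{\mathbb{S}^1}(\varphi_1,\varphi_2)$. Consequently, for two points on the same disk $D_a$ we have $d(f^k(x,a),f^k(x',a))=d_D(x,x')$ for all $k$. In particular $d_n^f=d$ on $D_a$, so $S$ is automatically an $\frac{\varepsilon}{2}$-net for $D_a$ in $d_n^f$. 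If instead $a=0$, then $f$ is the identity on $D_0$ and the same conclusion holds trivially.

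Now take $(y,b)\in D_b$ and compare it with the point $(y,a)\in D_a$ that has the same planar coordinates. Then
$$f^k(y,b)=(R_b^k y,b),\qquad f^k(y,a)=(R_a^k y,a).$$
These two points have the same radius $\rho$, and their angles differ by $k(b-a)$ modulo $1$. Therefore
$$d(f^k(y,b),f^k(y,a))=\rho\, d_{\mathbb{S}^1}(kb,ka)+|a-b|\leqslant k|a-b|+|a-b|\leqslant n|a-b|<\frac{\varepsilon}{2},$$
using $\rho\leqslant 1$, $d_{\mathbb{S}^1}(kb,ka)\leqslant k|a-b|$ and $k\leqslant n-1$. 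The same estimate covers the case where one of $a,b$ equals $0$, because the identity is the rotation by angle $0$.

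Next, choose $(x_i,a)\in S$ with $d((y,a),(x_i,a))<\frac{\varepsilon}{2}$. By the isometry property this distance is preserved under all iterates, so
$$d\big(f^k(y,a),f^k(x_i,a)\big)<\frac{\varepsilon}{2}\quad\text{for } 0\leqslant k\leqslant n-1.$$
Applying the triangle inequality for $d$ in each iterate $k$ then gives
$$d_n^f\big((y,b),(x_i,a)\big)<\frac{\varepsilon}{2}+\frac{\varepsilon}{2}=\varepsilon.$$
This shows that $S$ is an $\varepsilon$-net for $D_a\cup D_b$ in $d_n^f$.

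The only delicate step is the angular estimate $d_{\mathbb{S}^1}(kb,ka)\leqslant k|a-b|$ together with the convention that "rotation by angle $a$" means the shift by $a$ in $\mathbb{R}/\mathbb{Z}$. If the rotation is instead meant with angle $2\pi a$, we would need to adjust constants, but the metric normalisation through $\varphi\in\mathbb{R}/\mathbb{Z}$ makes the shift-by-$a$ convention consistent. The hypothesis $\varepsilon<1$ plays no essential role in this argument.
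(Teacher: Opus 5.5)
Your proof is correct and follows essentially the same route as the paper: the rotations are isometries on each level, and the triangle inequality is applied with the cross-level term bounded by $k|a-b|+|a-b|\leqslant n|a-b|<\varepsilon/2$. The only cosmetic difference is that you use $(y,a)$ as the intermediate point, whereas the paper transports the net to $D_b$ and uses $(x_i,b)$; your remark that $\varepsilon<1$ is inessential is also accurate, since the paper uses it only to identify $k=n-1$ as the maximizing iterate.
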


\noindent{\it Proof.}
Since $f$ is a rotation on each disk, and therefore an isometry, we immediately see that $S$ is an $\frac{\varepsilon}{2}$-net for $D_a$ in the dynamical metric $d_n^f$. We have to prove that for any $(y,b)\in D_b$ there exists $(x_i,a)\in S$ such that $d_n^f((y,b),(x_i,a))<\varepsilon$. Denote with $S'=\{(x_1,b),(x_2,b),\ldots,(x_p,b)\}\subset D_b$ the set of points on disk $D_b$ such that points in $S'$ have the same polar coordinates as those in $S$, with the only difference being their height. Then clearly, as for the disk $D_a$, we have that $S'$ is an $\frac{\varepsilon}{2}$-net for $D_b$. Based on this, there exists $(x_i,b)\in S'$ such that $d((y,b),(x_i,b))<\frac{\varepsilon}{2}$, and again, because rotation is an isometry, $d_n^f((y,b),(x_i,b))<\frac{\varepsilon}{2}$. Now using the triangle inequality we obtain:

$$d_n^f((y,b),(x_i,a))\leqslant d_n^f((y,b),(x_i,b))+d_n^f((x_i,b),(x_i,a))<\frac{\varepsilon}{2}+d_n^f((x_i,b),(x_i,a)).$$

The proof is done if we show that $d_n^f((x_i,b),(x_i,a))<\frac{\varepsilon}{2}$. Since $|a-b|<\frac{\varepsilon}{2n}$, we have that $d((x_i,b),(x_i,a))<\frac{\varepsilon}{2n}$. Also, since the rotation doesn't change the distance of $x_i$ from the centre od the disk, we have that only the change of the angle influences the change in the dynamical distance $d_n^f((x_i,b),(x_i,a))$. Then, for all $k\in\mathbb{N}$:
$$d(f^{k}(x_i,a),f^{k}(x_i,b))=d_D(f^{k}(x_i,a),f^{k}(x_i,b))+|a-b|\leqslant0+1\cdot d_{\mathbb{S}^1}(ka,kb)+|a-b|.$$

Since $\varepsilon<1$, i.e., $\varepsilon/2<1/2$, the maximal distance $d_D(f^{k}(x_i,a),f^{k}(x_i,b))$ is achieved for $k=n-1$. In that case, $d_D(f^{k}(x_i,a),f^{k}(x_i,b))\leqslant d_{\mathbb{S}^1}(ka,kb)=d_{\mathbb{S}^1}(0,k|a-b|)\leqslant (n-1)\frac{\varepsilon}{2n}$. Finally, we see that:
$$d_n^f((x_i,a),(x_i,b))=\max\limits_{0\leqslant k\leqslant n-1}d(f^{k}(x_i,a),f^{k}(x_i,b))<(n-1)\frac{\varepsilon}{2n}+\frac{\varepsilon}{2n}=\frac{\varepsilon}{2}$$
\qed\\

\noindent{\it Proof of Proposition \ref{prop:upper}.}  We can construct an $\frac{\varepsilon}{2}-$net $S_a$ in metric $d_D$, for every disk $D_a\subset X$, such that the points in every net have the same first two coordinates, but are at different heights. Denote by $N_D(\frac{\varepsilon}{2})$ the cardinality of such a net. Denote by $N(A,\frac{\varepsilon}{2n})$ the minimal number of open balls $B_j$, $1\leqslant j\leqslant N(A,\frac{\varepsilon}{2n})$, of radius $\frac{\varepsilon}{2n}$ needed to cover $A$. For every $j$, $1\leqslant j\leqslant N(A,\frac{\varepsilon}{2n})$, choose one $a_j\in B_j$ and consider the constructed $\frac{\varepsilon}{2}$-net in metric $d_D$ on the disk $D_{a_j}$. By the previous lemma, for every $j$, the set $S_{a_j}$ is an $\varepsilon-$net for the union of the disks $D_a$, $a\in B_j$ in the dynamic metrics $d_n^f$. We conclude that the union of these nets will be an $\varepsilon-$net of the whole space $X$ with respect to the dynamic metrics $d_n^f$. Consequently, we obtained an $(n,\varepsilon)$-spanning set for $X$ and therefore

$$\mathrm{span}(n,\varepsilon)\leqslant N_D\left(\frac{\varepsilon}{2}\right)N\left(A,\frac{\varepsilon}{2n}\right).$$

It follows that

\begin{align*}
h_{\mathrm{pol}}(f)&\leqslant\lim_{\varepsilon\to 0}\limsup\limits_{n\to+\infty}\frac{\log{\left( N_D\left(\frac{\varepsilon}{2}\right)N\left(A,\frac{\varepsilon}{2n}\right)\right)}}{\log{n}}=\lim_{\varepsilon\to 0}\limsup\limits_{n\to+\infty}\frac{\log{N_D\left(\frac{\varepsilon}{2}\right)}+\log{N\left(A,\frac{\varepsilon}{2n}\right)}}{\log{n}}\\
&=\lim_{\varepsilon\to 0}\limsup\limits_{n\to+\infty}\frac{\log{N\left(A,\frac{\varepsilon}{2n}\right)}}{\log{n}}=\lim_{\varepsilon\to 0}\limsup\limits_{n\to+\infty}\frac{\log{N\left(A,\frac{\varepsilon}{2n}\right)}}{\log{(\frac{2n}{\varepsilon})}\frac{\log{n}}{\log{(\frac{2n}{\varepsilon})}}}\\
&=\lim_{\varepsilon\to 0}\limsup\limits_{n\to+\infty}\frac{N\left(A,\frac{\varepsilon}{2n}\right)}{\log{(\frac{2n}{\varepsilon}})}
\leqslant\lim_{\varepsilon\to 0}\limsup_{\delta\to 0}\frac{\log N(A,\delta)}{\log(1/\delta)}=\dim_B(A).
\end{align*}
\qed

\begin{prop}\label{prop:lower}
Let $(a_k)_{k\in\mathbb{N}}$ be a sequence of distinct elements in $[0,1]$ that converges to zero and $A:=\{a_k\mid k\in\mathbb{N}\}\cup\{0\}$. We denote by $D_a$ a unit disk in a horizontal plane at height $a$, and with $I$ a vertical line segment which connects the centers of all disks. We define $f:X\to X$ to be a rotation by the angle $a$ on each disk $D_a$, $a\neq0$, $f|_{D_a}=R_a$, and the identity map on $I$ and $D_{0}$, $f|_{I}:=\mathrm{Id}_I$, $f|_{D_{0}}:=\mathrm{Id}_{D_0}$. Let
$$X:=\bigg(\bigcup_{a\in A}D_a\bigg)\cup I,$$
with metric $d$ defined in the following way:
$$d((x,a_1),(y,a_2))=d_D(x,y)+|a_1-a_2|.$$

Then $h_{\mathrm{pol}}(f)\geqslant{\mathrm{dim}}_B(A)$. 
\end{prop}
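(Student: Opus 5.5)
The plan is to produce, for a fixed $\varepsilon$ (we take $\varepsilon=\frac14$), $(n,\varepsilon)$-separated sets whose cardinality is bounded below by a covering number of $A$ at scale $\sim\frac1n$. Since $\mathrm{sep}(n,\varepsilon)$ is nonincreasing in $\varepsilon$, it suffices to show
$$\limsup_{n\to\infty}\frac{\log \mathrm{sep}(n,\tfrac14)}{\log n}\geqslant \dim_B(A),$$
because the limit over $\varepsilon\to0$ is then at least this value.

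\textbf{Step 1 (separation criterion).} For each $a\in A$ consider the point $p_a:=(x_0,a)$, where $x_0$ has polar coordinates $(1,0)$, i.e.\ it lies on the boundary circle at angle $0$. Note that $p_0\in D_0$ is fixed by $f$. Since all these points have radius $1$, for $a,b\in A$ and $k\geqslant0$ we get
$$d(f^k(p_a),f^k(p_b))=d_{\mathbb{S}^1}(ka,kb)+|a-b|=d_{\mathbb{S}^1}(0,k|a-b|)+|a-b|.$$
We claim: if $|a-b|\geqslant\frac{1}{4(n-1)}$, then $d_n^f(p_a,p_b)\geqslant\frac14$. Put $t=|a-b|$.
\begin{itemize}
\item If $t\geqslant\frac14$, the claim already holds at $k=0$, thanks to the height term.
\item If $t<\frac14$, let $k$ be the smallest integer with $kt\geqslant\frac14$. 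Then $k\leqslant n-1$, because $(n-1)t\geqslant\frac14$. Also $kt<\frac14+t<\frac12$. Hence $d_{\mathbb{S}^1}(0,kt)=kt\geqslant\frac14$.
\end{itemize}
Thus any $\frac{1}{4(n-1)}$-separated subset $E\subset A$ gives an $(n,\frac14)$-separated set $\{p_a\mid a\in E\}$ in $X$.

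\textbf{Step 2 (comparison with covering numbers).} Let $\delta_n=\frac{1}{4(n-1)}$ and take a maximal $\delta_n$-separated subset $E_n\subset A$. By maximality, $E_n$ is a $\delta_n$-net of $A$, so $|E_n|\geqslant N(A,\delta_n)$. Together with Step 1 this gives
$$\mathrm{sep}(n,\tfrac14)\geqslant N\Big(A,\tfrac{1}{4(n-1)}\Big).$$

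\textbf{Step 3 (discretization of the limsup).} This is the only mildly delicate point: $\dim_B(A)$ is a limsup over all $\delta\to0$, while we only control the sequence $\delta_n$. Given $\delta$ small, choose $n$ with $\delta_{n+1}\leqslant\delta<\delta_n$. Monotonicity of $N(A,\cdot)$ gives $N(A,\delta)\leqslant N(A,\delta_{n+1})$. Moreover $\log(1/\delta)=\log n+O(1)$. Therefore
$$\frac{\log N(A,\delta)}{\log(1/\delta)}\leqslant\frac{\log \mathrm{sep}(n+2,\tfrac14)}{\log(n+2)}\,(1+o(1)).$$
Taking the limsup as $\delta\to0$ yields the displayed inequality above, and hence $h_{\mathrm{pol}}(f)\geqslant\dim_B(A)$.

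I expect Step 1 to be the main point to get right. The key is choosing points on the unit circle, so that the factor $\min\{\rho_1,\rho_2\}$ in $d_D$ equals $1$. The other is checking that the sequence $k|a-b|$ must land in $[\frac14,\frac12)$ before time $n$, using only that the step $t<\frac14$. The hypotheses that the $a_k$ are distinct and converge to $0$ are used only to make $X$ and $A$ compact; they are not needed in the estimate itself.
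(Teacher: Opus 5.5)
Your proof is correct and follows essentially the same route as the paper. As in the paper, you lift a maximal separated set of heights at scale $\sim\frac1n$ to an $(n,\varepsilon)$-separated set in $X$ via the linear growth of $k|a-b|$ on $\mathbb{S}^1$, bound $\mathrm{sep}(n,\varepsilon)$ below by $N(A,\cdot)$ at that scale, and discretize the upper limit the same way. Your simplifications (one point per level on the unit circle, fixed $\varepsilon=\frac14$, the height term absorbing $|a-b|\geqslant\frac14$ so no terms need discarding, and taking the least $k$ with $k|a-b|\geqslant\frac14$) make Step 1 tighter than the paper's doubling lemma. That lemma, as written, only produces some $j<2n$ and needs $\varepsilon/\rho_0\leqslant\frac14$ to ensure $d_{\mathbb{S}^1}(0,j\alpha)=j\alpha$; these are harmless constant-factor issues, but your version avoids them.
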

\noindent{\it Proof.} We may assume that $(a_k)$ is strictly decreasing (otherwise, we can relabel the sequence). We also assume that $|a_{j+1}-a_j|<1/2$ (if not, we discard finitely many terms so that this condition holds). 

Fix $\varepsilon_0\in(0,1/2)$. Let $Y:=\{y_1,\ldots,y_r\}\subset D\setminus\{0\}$ be a set of maximal cardinality such that 
$$i\neq j\quad\Rightarrow\quad  d_D(y_i,y_j)\ge\varepsilon_0$$ and, for $a\in A$
$$Y_a:=Y\times\{a\}\subset X.$$ 
Since $f$ acts as a rotation on each level, each $Y_a$ is $(n,\varepsilon_0)$-separated, for all $n\in\mathbb{N}$. Denote by $\rho_0$ the minimal Euclidean norm of the points $y_j$, $j\in\{1,\ldots,r\}$.

Now fix 
\begin{equation}\label{eq:varepsilon}
\varepsilon\leqslant\min\left\{\varepsilon_0,\rho_0/2\right\},
\end{equation} 
$n\in\mathbb{N}$ and choose $\{z_1,\ldots,z_m\}\subset A$ to be a $\varepsilon/(n\rho_0)$-separated set in $A$ of the maximal cardinality.
Define
$$S:=S(n,\varepsilon):=\bigcup_{j=1}^{m} Y_{z_j}.$$

We claim that $S$ is $(n,\varepsilon)$-separated set.
Let $p=(x,a), q=(y,b)\in S$. If $x\neq y$, then
$$d(p,q)\geqslant d_D(x,y)\geqslant\varepsilon_0>\varepsilon.$$ If $x=y=(\rho,\varphi)$, then
$$d(f^j(p),f^j(q))=|a-b|+\rho d_{\mathbb{S}^1}(\varphi+ja,\varphi+jb)\geqslant\rho_0 d_{\mathbb{S}^1}(ja,jb)=\rho_0d_{\mathbb{S}^1}(0,j|a-b|).$$
To conclude that $S$ is $(n,\varepsilon)$-separated we apply the following lemma with $\alpha:=|a-b|$.

\begin{lem} For $\alpha\in [\varepsilon/(n\rho_0),1/2]$ there exists $j\in\{0,\ldots,n-1\}$ such that 
$$d_{\mathbb{S}^1}(0,j\alpha)\geqslant\frac{\varepsilon}{\rho_0}.$$
\end{lem}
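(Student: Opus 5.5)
The plan is to set $c:=\varepsilon/\rho_0$. The choice~(\ref{eq:varepsilon}) gives $c\leqslant 1/2$, and the hypothesis becomes $\alpha\in[c/n,1/2]$. We look at the points $j\alpha$, $j=0,\ldots,n-1$, on $\mathbb{R}$ before reducing mod $1$. The target is a $j$ with $j\alpha\in[c,1-c]$, since $d_{\mathbb{S}^1}(0,x)=\min\{x,1-x\}\geqslant c$ for such $x$. The first two steps cover the generic case; the last paragraph explains why the lemma cannot be proved exactly as worded.

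\emph{Case $\alpha\geqslant c$.} Take $j=1$. Since $\alpha\leqslant 1/2$, we get $d_{\mathbb{S}^1}(0,\alpha)=\alpha\geqslant c$. This uses $n\geqslant 2$; for $n=1$ the only index is $j=0$ and the conclusion fails.

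\emph{Case $\alpha<c$.} Let $j_0:=\lceil c/\alpha\rceil$ be the first index with $j_0\alpha\geqslant c$. Minimality gives $(j_0-1)\alpha<c$, so $j_0\alpha<c+\alpha<2c$. Hence $j_0\alpha\leqslant 1-c$ whenever $2c\leqslant 1-c$, i.e.\ $c\leqslant 1/3$. In words: when $c\leqslant1/3$ the steps $\alpha<c$ are shorter than the window $[c,1-c]$, so the orbit cannot jump over it. It remains to check that $j_0\leqslant n-1$. From $\alpha\geqslant c/n$ we only obtain $c/\alpha\leqslant n$, hence $j_0\leqslant n$.

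I expect the endpoint issues to be the main obstacle, and the argument does not close them.
\begin{itemize}
\item Left endpoint $\alpha=c/n$: here $j_0=n$, and $j\alpha\leqslant (n-1)c/n<c$ for every $j\leqslant n-1$. So no admissible $j$ exists, and the lemma fails there.
\item Value $c=1/2$ (allowed by~(\ref{eq:varepsilon})): the conclusion would need $j\alpha\equiv 1/2 \pmod 1$, which fails for example for $\alpha=1/3$. More generally, for $c$ close to $1/2$ the gap $1-2c$ can be smaller than $\alpha$, so the orbit can skip the window.
\end{itemize}
The repair I would try first is to keep the proof above and tighten the constants, both of which are free in the application in Proposition~\ref{prop:lower}. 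Requiring $\varepsilon\leqslant\rho_0/3$ gives $c\leqslant 1/3$. Taking the set $\{z_1,\ldots,z_m\}$ to be $\varepsilon/((n-1)\rho_0)$-separated gives $j_0\leqslant n-1$, assuming $n\geqslant 2$. Neither change alters the growth rate of $m$, so the entropy estimate is unaffected. I do not see a route that proves the lemma exactly as stated, because the statement is false at these boundary values.
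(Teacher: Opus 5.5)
Your diagnosis is right: the lemma is false as worded. Your two failure modes match the two unjustified steps in the paper's own proof, which makes the same case split and then doubles, $j=1,2,4,\ldots$. First, the paper asserts $d_{\mathbb{S}^1}(0,2\alpha)=2\alpha$ from $\alpha<\varepsilon/\rho_0\leqslant 1/2$; this actually requires $\alpha\leqslant 1/4$. Second, it ends with ``since $\alpha\geqslant\varepsilon/(n\rho_0)$ the lemma follows'' without checking that the successful index is at most $n-1$; with doubling that index can be as large as $2n-1$.

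Your counterexamples extend slightly, and this shows that your corrected hypotheses are sharp:
\begin{itemize}
\item For every $\alpha\in[c/n,c/(n-1))$ one has $d_{\mathbb{S}^1}(0,j\alpha)=j\alpha<c$ for all $j\leqslant n-1$. So the failure occurs on a whole interval, not only at the endpoint.
\item The choice $\alpha=1/3$ defeats every $c\in(1/3,1/2]$ once $n\geqslant 2$, since $j\alpha$ modulo $1$ takes only the values $0,1/3,2/3$.
\end{itemize}
Hence $c\leqslant 1/3$ together with $\alpha\geqslant c/(n-1)$, $n\geqslant 2$, is exactly the right statement. Your argument with $j_0=\lceil c/\alpha\rceil$ proves it completely: it gives $j_0\alpha\in[c,2c)\subset[c,1-c]$ and $j_0\leqslant n-1$.

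Compared with the paper's doubling, your linear choice of $j_0$ is what yields the optimal index bound. Imposing $c\leqslant 1/3$ would make doubling's value estimate go through, since $2^k\alpha\in[c,2c)$. However, doubling would still need roughly $\alpha\geqslant 2c/n$ to keep $2^k\leqslant n-1$.

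Either way, as you say, the repair of Proposition~\ref{prop:lower} costs nothing. Restricting to $\varepsilon\leqslant\min\{\varepsilon_0,\rho_0/3\}$ is harmless, because only $\varepsilon\to 0$ matters. Replacing the separation scale $\varepsilon/(n\rho_0)$ by $\varepsilon/((n-1)\rho_0)$ does not change $\limsup_{n}\log N(A,\cdot)/\log n$.
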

\noindent{\it Proof of Lemma.} If $\alpha\geqslant \varepsilon/\rho_0$, we are done. If $\alpha<\varepsilon/\rho_0$, then, since $\varepsilon/\rho_0\leqslant1/2$ (see~(\ref{eq:varepsilon})) we have
$$d_{\mathbb{S}^1}(0,2\alpha)=2\alpha.$$ If $2\alpha\geqslant \varepsilon/\rho_0$ we are done. Otherwise, we repeat the same step. Since $\alpha\geqslant\varepsilon/(n\rho_0)$ the lemma follows.\qed

\vspace{3mm}

We continue the proof of Proposition~\ref{prop:lower}. Since $\{z_1,\ldots,z_m\}$ is the $\varepsilon/(n\rho_0)$-separated set in $A$ of the maximal cardinality, we have
$$|S|=m\cdot r>m\geqslant N(A,\varepsilon/(n\rho_0),$$ where, as before, $N(A,\delta)$ denotes the minimal cardinality of $\delta$-net of $A$.
Since $S$ is $(n,\varepsilon)$-separated, we have
$$\mathrm{sep}(n,\varepsilon)\geqslant |S|\geqslant N(A,\varepsilon/(n\rho_0)),$$ and therefore
$$\begin{aligned}
&\limsup_{n\to\infty}\frac{\log\mathrm{sep(n,\varepsilon)}}{\log n}\geqslant \limsup_{n\to\infty}\frac{\log\ N(A,\varepsilon/(n\rho_0))}{\log n}=
\\
&\limsup_{n\to\infty}\frac{\log N(A,\varepsilon/(n\rho_0))}{\log(n\rho_0/\varepsilon)}\cdot\frac{\log(n\rho_0/\varepsilon)} {\log n}=\\
&\limsup_{n\to\infty}\frac{\log\ N(A,\varepsilon/(n\rho_0))}{\log(n\rho_0/\varepsilon)}=\limsup_{n\to\infty}\frac{N(A,1/n)}{\log n}=\\
&\stackrel{(*)}{=}\limsup_{\delta\to 0}\frac{\log N(A,\delta)}{\log(1/\delta)}={\mathrm{dim}}_B(A).
\end{aligned}$$
Let us explain why the equality $(*)$ holds. A continual upper limit is always greater than or equal to an upper limit of a sequence. We will show the other (nontrivial) inequality. Let $(x_n)_{n\in\mathbb{N}}$, $x_n\to 0$ be a sequence such that
$$\limsup_{\delta\to 0}\frac{\log N(A,\delta)}{\log(1/\delta)}=\lim_{n\to\infty}\frac{\log N(A,x_n)}{\log(1/x_n)}=\mathrm{dim}_B(A).$$
Let $m_n\in\mathbb{N}$ be such that
$$x_n\in\left[\frac{1}{m_n+1},\frac{1}{m_n}\right].$$ Obviousy $m_n\to\infty$, when $n\to\infty$. Since the function $\delta\mapsto N(A,\delta)$ is decreasing and $\log x$ is an increasing function, we have
\begin{equation}\label{eq:m_n}
\frac{\log N(A,1/m_n)}{\log(m_n+1)}\leqslant \frac{\log N(A,x_n)}{\log(1/x_n)}\leqslant \frac{\log N(A,1/(m_n+1))}{\log m_n}.
\end{equation} 
From $m_n\to\infty$ we have
$$\lim_{n\to\infty}\frac{\log(m_n+1)}{\log m_n}=\lim_{n\to\infty}\frac{\log m_n+\log(1+1/m_n)}{\log m_n}=1,$$ and therefore
$$\limsup_{n\to\infty}\frac{\log N(A,1/m_n)}{\log m_n}=\limsup_{n\to\infty}\frac{\log N(A,1/m_n)}{\log(m_n+1)}=\limsup_{n\to\infty}\frac{\log N(A,1/(m_n+1))}{\log m_n}.$$
By taking the upper limit in~(\ref{eq:m_n}), we obtain
$$\limsup_{n\to\infty}\frac{\log N(A,1/m_n)}{\log m_n}=\lim_{n\to\infty}\frac{\log (A,x_n)}{\log(1/x_n)}=\mathrm{dim}_B(A).$$
Now $(*)$ follows from the fact that
$$\limsup_{n\to\infty}\frac{\log N(A,1/n)}{\log n}\geqslant \limsup_{n\to\infty}\frac{\log N(A,1/m_n)}{\log m_n}.$$ \qed

We end this subsection with the following result, which strengthens the Proposition 3.3 obtained in \cite{AOM}.

\begin{prop}\label{prop:ekvi}
There exists a homeomorphism $f:X\to X$ on a continuum $X$ which is not equicontinuous and such that $h_{\mathrm{pol}}(f)=0$.
\end{prop}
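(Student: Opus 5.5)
We take the disk model of Propositions \ref{prop:upper} and \ref{prop:lower} with a set of heights $A$ of ball-dimension zero. Concretely, set $A:=\{a_k\mid k\in\mathbb{N}\}\cup\{0\}$ with $a_k=2^{-k}$, let
$$X=\bigg(\bigcup_{a\in A}D_a\bigg)\cup I,$$
and let $f$ act as $R_a$ on $D_a$ and as the identity on $I\cup D_0$. The plan is to check that $X$ is a continuum and $f$ a homeomorphism, then get $h_{\mathrm{pol}}(f)=0$ from Proposition \ref{prop:upper}, and finally show $f$ is not equicontinuous.

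First, $X$ is connected because every disk meets the segment $I$. It is compact because it is a closed bounded subset of $\mathbb{R}^3$: the only accumulation height is $0\in A$. The metric $d$ is equivalent to the Euclidean one by the proposition on $d_D$. The map $f$ is bijective. It is continuous away from $D_0$ since each $D_{a_k}$ is open and closed in $X\setminus D_0$, apart from its centre on $I$, where $f$ fixes everything. At a point $(x,0)\in D_0$, take $(x_k,b_k)\to(x,0)$. Then $b_k\to0$, so the rotation angle $b_k\to 0$, and $d_D(R_{b_k}x_k,x_k)\le d_{\mathbb{S}^1}(0,b_k)\to0$. Hence $f(x_k,b_k)\to(x,0)=f(x,0)$. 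The same argument applies to $f^{-1}$, so $f$ is a homeomorphism.

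For the entropy, $N(A,\delta)\le \log_2(1/\delta)+2$: the points $a_k\ge\delta$ number about $\log_2(1/\delta)$, and all the remaining points lie in a single ball around $0$. Therefore $\dim_B(A)=0$, and Proposition \ref{prop:upper} gives $h_{\mathrm{pol}}(f)\le 0$, so $h_{\mathrm{pol}}(f)=0$.

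Non-equicontinuity is the step that needs a concrete estimate, and it is elementary. Take $p=(x,0)$ with $x$ on the unit circle, and $q_k=(x,a_k)$. Then $d(p,q_k)=a_k\to0$. But $f^{n}(q_k)$ is $x$ rotated by $n a_k=n2^{-k}$, so choosing $n=2^{k-1}$ gives a rotation by $1/2$. Hence
$$d(f^{n}(p),f^{n}(q_k))\ge d_{\mathbb{S}^1}(0,1/2)=1/2,$$
which contradicts equicontinuity with $\varepsilon=1/2$. One might worry that Proposition \ref{prop:upper} requires only $A\subset[0,1]$ compact with $0\in A$; this holds here. I expect no genuine obstacle beyond verifying continuity at $D_0$. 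The example is pointwise periodic on each $D_{a_k}$ with period $2^k$, so it also fits the theme of the paper.
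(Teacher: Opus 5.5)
Your proof is correct and follows the paper's own argument. Both use the disk model over a height set of ball-dimension zero, apply Proposition~\ref{prop:upper} to get zero polynomial entropy, and break equicontinuity by a half-turn comparison between a point of $D_0$ and the corresponding point on a nearby disk. Your dyadic choice $a_k=2^{-k}$ is slightly cleaner than the paper's $a_k=1/[e^k]$, since the paper has to pick $k_0$ with $[e^{k_0}]$ even to get an exact half-turn and takes this for granted. You also supply the continuum and homeomorphism checks that the paper leaves implicit.
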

\noindent{\it Proof.}
We will apply Proposition \ref{prop:upper} and Proposition \ref{prop:lower}, for the suitably chosen sequence $(a_k)_{k\in\mathbb{N}}$, $a_k=\frac{1}{[e^k]}$ and $A=\{a_k\mid k\in\mathbb{N}\}\cup\{0\}$. Let $f$ and $(X,d)$ be as in Proposition \ref{prop:upper} and let us begin by proving that $f$ is not equicontinuous.

Let $\frac{1}{2}>\varepsilon>0$. For an arbitrary $\delta>0$, choose $k_0$ such that $\frac{1}{[e^{k_0}]}<\delta$ and $[e^{k_0}]$ is an even number. If we take two points $p(1,0,0)$ and $q(1,0,\frac{1}{[e^{k_0}]})$, then $d(p,q)<\delta$. Take $n=\frac{[e^{k_0}]}{2}$. Then
$$f^n(p)=p,\quad f^n(q)=\left(1,\frac{[e^{k_0}]}{2}\frac{1}{[e^{k_0}]},\frac{1}{[e^{k_0}]}\right)=\left(1,\frac{1}{2},\frac{1}{[e^{k_0}]}\right),$$
so we have $d(f^n(p),f^n(q))=\frac{1}{2}+\frac{1}{[e^{k_0}]}>\varepsilon$.

In order to show that $h_{\mathrm{pol}}(f)=0$, it is enough to show that $\dim_B(A)=0$, because of Proposition \ref{prop:upper}. Let $\varepsilon>0$. We are interested for which $k\in\mathbb{N}$ we have that $a_k<\varepsilon$.
$$a_k=\frac{1}{[e^k]}<\varepsilon \iff [e^k]>\frac{1}{\varepsilon}.$$
Since $[x]>\frac{x}{2}$, $[e^k]>\frac{e^k}{2}>\frac{1}{\varepsilon}$ and the last inequality is true for $k>\log{\frac{2}{\varepsilon}}$.

We conclude that for $k>\log{\frac{2}{\varepsilon}}$, elements $a_k$ belong to a ball centered at $0$, with radius $\varepsilon$, so $A$ can be covered with $[\log{\frac{2}{\varepsilon}}]+1$ balls. Now, we can compute the ball-dimension of the set $A$:
$$\dim_B(A)=\limsup_{\varepsilon\to0}\frac{\log N(A,\varepsilon)}{-\log{\varepsilon}}\leqslant\limsup_{\varepsilon\to0}\frac{\log {([\log{\frac{2}{\varepsilon}}]+1)}}{-\log{\varepsilon}}=0.$$

\qed

\subsection{Proof of main result}

We first prove Theorem \ref{main}.

Let $a_k=\frac{1}{[k^\alpha]}$, for some $\alpha\in\mathbb{R}$, $\alpha>1$. and $A=\{a_k\mid k\in\mathbb{N}\}\cup\{0\}$. Define $D_k$ to be the unit disk in a horizontal plane at height $a_k$, $D_0$ is the unit disk in the plane $z=0$ and $I$ is a vertical straight line connecting the centers of all the disks. Then (see the picture below)

$$X:=D_0\cup\bigg(\bigcup\limits_{k\in\mathbb{N}} D_k\bigg)\cup I.$$

\begin{figure}[h]
    \centering 
    \includegraphics[width=0.3\textwidth]{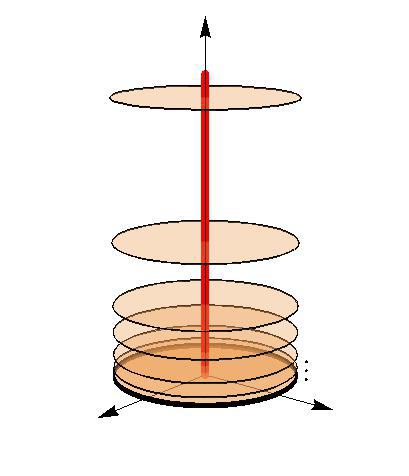}
\end{figure}

Let $R_{a_k}:D_{k}\to D_{k}$ denote the rotation map. Now we define the homeomorphism $f:X\to X$, such that $f|_{D_k}:=R_{a_k}$ and $f|_{I}:=\mathrm{Id}_I$, $f|_{D_{0}}:=\mathrm{Id}_{D_0}$. This is a pointwise periodic map such that $\mathrm{h_{pol}}(f)=\frac{1}{\alpha+1}$, as we will now see. 

We computed the ball-dimension of the set $A=\big\{\frac{1}{[k^{\alpha}]}\mid k\in\mathbb{N}\big\}\cup\{0\}$ in Example \ref{ex:bd} and found it to be $\dim_B(A)=\frac{1}{\alpha+1}$. Now, using Proposition \ref{prop:upper}, we have that
$$h_{\mathrm{pol}}(f,X)\leqslant {\mathrm{dim}}_B(A)=\frac{1}{\alpha+1}.$$
Similarly, using Proposition \ref{prop:lower}, we have that
$$h_{\mathrm{pol}}(f,X)\geqslant {\mathrm{dim}}_B(A)=\frac{1}{\alpha+1}.$$

Let us now proceed to prove Corollary \ref{cor}. Firstly, let $a\neq+\infty$. As we have seen, if $a\in(0,1/2)$, there exists a pointwise periodic homeomorphism $f:X\to X$ on a continuum $X$ such that $h_{\mathrm{pol}}(f)=a$. We obtain it by taking $\alpha=\frac{1}{a}-1$ in Example \ref{ex:bd}. Also, by taking $f$ to be as in proof of Proposition \ref{prop:ekvi}, we get $h_{\mathrm{pol}}(f)=0$. Note that if $f,g:X\to X$ are pointwise periodic homeomorphisms, then $f\times g:X\times X\to X\times X$ is also a pointwise periodic homeomorphism and $h_{\mathrm{pol}}(f\times g)=h_{\mathrm{pol}}(f)+h_{\mathrm{pol}}(g)$. Clearly, we can take a finite product of $l$ maps and get $h_{\mathrm{pol}}(f_1\times f_2\times\ldots\times f_l)=h_{\mathrm{pol}}(f_1)+\ldots+h_{\mathrm{pol}}(f_l)$. In this way we can construct a pointwise periodic homeomorphism with arbitrary finite polynomial entropy.

Lastly, let us construct a continuum $X$ and a pointwise periodic homeomorphism such that $h_{\mathrm{pol}}(f)=+\infty$. Let $(X_m,\rho_m)$ be a compact metric space and $f_m:X_m\to X_m$ a pointwise periodic homeomorphism with at least one fixed point $p_m$ and $h_{pol}(f_m)=m$. We can take, for example, the space constructed in Corollary~\ref{cor}. Now define a new metric $d_m$ on $X_m$ by $d_m(x,y):=1/m\rho_m(x,y)$ and the space $X$ as the wedge sum of the spaces $X_m$. More precisely:
$$X:=\bigsqcup_{m=1}^\infty X_m\;/\sim,\quad\mbox{ where}\quad p_m\sim p_n$$ for all $m,n\in\mathbb{N}$. We now define the metric $d$ on $X$ in the standard way (see, e.g.,~\cite{BH}):
$$d(x,y):=\begin{cases} d_m(x,y), &x,y\in X_m\\
d_m(x,p_m)+d_n(y,p_n),&x\in X_n, y\in X_m.
\end{cases}.$$
It is straightforward to verify that the space $(X,d)$ is a compact metric space and that every $X_m$ can be identified with its closed subset (so we may assume $X_m\subset X$).
Now define $f:X\to X$, as
$$f|_{X_m}:=f_m.$$ It is easy to see that $f$ is a homeomorphism which is pointwise periodic. Since every $X_m$ is $f$-invariant, we have
$$h_{\mathrm{pol}}(f)\geqslant h_{\mathrm{pol}}(f|_{X_m})=m,$$ for every $m$, which then implies $h_{\mathrm{pol}}(f)=\infty$.\qed

\vspace{0.3cm}
We end this paper with an interesting question.\\

\textbf{Question}: Let us remember that pointwise periodic homeomorphisms on local dendrites have zero polynomial entropy. Could there exist a one-dimensional continuum $X$ such that a pointwise periodic homeomorphism $f:X\to X$ has positive polynomial entropy?

\end{document}